\documentclass[final,leqno]{siamltex704}
\usepackage{amsmath}
\usepackage{graphicx}
\usepackage[notcite,notref]{showkeys}
\usepackage{mathrsfs}
\usepackage{float}
\usepackage{amsfonts,amssymb}
\usepackage{dsfont}
\usepackage{pifont}
\usepackage{wrapfig} 
\usepackage{hyperref}
\usepackage{multirow}
\numberwithin{equation}{section}
\def\3bar{{|\hspace{-.02in}|\hspace{-.02in}|}}
\def\E{{\mathcal{E}}}
\def\T{{\mathcal{T}}}

\def\O{{\mathcal{O}}}

\def\pT{{\partial T}}

\def\W{{\mathcal{W}}}

\def\bw{{\mathbf{w}}}

\def\bn{{\mathbf{n}}}

\def\bx{{\mathbf{x}}}

\def\ljump{{[\![}}
\def\rjump{{]\!]}}

\newtheorem{algorithm}{Primal-Dual Weak Galerkin Algorithm}[section]

\setlength{\parindent}{0.25in} \setlength{\parskip}{0.08in}

\title{A New Primal-Dual Weak Galerkin Finite Element Method for Ill-posed Elliptic Cauchy Problems}

\author{Chunmei Wang \thanks{Department of Mathematics \& Statistics, Texas Tech University, Lubbock, TX 79409, USA. The research of Chunmei Wang was partially supported by National Science Foundation Award DMS-1849483 and DMS-1648171.}
}

\begin{document}

\maketitle

\begin{abstract}
A new numerical method is devised and analyzed for a type of ill-posed elliptic Cauchy problems by using the primal-dual weak Galerkin finite element method. This new primal-dual weak Galerkin algorithm is robust and efficient in the sense that the system arising from the scheme is symmetric, well-posed, and is satisfied by the exact solution (if it exists). An error estimate of optimal order is established for the corresponding numerical solutions in a scaled residual norm. In addition, a mathematical convergence is established in a weak $L^2$ topology for the new numerical method. Numerical results are reported to demonstrate the efficiency of the primal-dual weak Galerkin method as well as the accuracy of the numerical approximations.
\end{abstract}

\begin{keywords} primal-dual, weak Galerkin, finite element methods, elliptic Cauchy problem, weak gradient, polygonal or polyhedral meshes.
\end{keywords}

\begin{AMS}
65N30, 65N15, 65N12
\end{AMS}

\pagestyle{myheadings}

\section{Introduction}
This paper is concerned with the development of new numerical methods for solving a type of elliptic Cauchy problems. For simplicity, we consider the second order elliptic equation with Cauchy boundary data on part of the boundary: Find an unknown function $u=u(\bx)$ such that
\begin{equation}\label{1}
\left\{
\begin{split}
-\nabla(a\nabla u) =&f\qquad\ \text{in}\ \Omega,\\
u =& g_1\qquad \text{on}\  \Gamma_d,\\
a\nabla u \cdot \bn=& g_2\qquad \text{on}\ \Gamma_n,
\end{split}\right.
\end{equation}
where $\Omega$ is an open bounded domain in $\mathbb R^d$ $(d=2, 3)$ with Lipschitz continuous boundary $\partial\Omega$; $\Gamma_d$ and $\Gamma_n$ are two segments of the domain boundary $\partial\Omega$; $f\in L^2(\Omega)$; the Cauchy data $g_1\in H^{\frac12}(\Gamma_d)$ and $g_2\in (H_{00}^{1/2}(\Gamma_n))'$ are two given functions defined on $\Gamma_d$ and $\Gamma_n$, respectively; $\bn$ is an unit outward normal direction to $\Gamma_n$. The diffusion coefficient $a=a(\bx)$ is assumed to be symmetric, bounded, and uniformly positive definite in the domain $\Omega$.

The essence of the elliptic Cauchy problem is to solve a partial differential equation on a domain with over-specified boundary conditions given on parts of the domain boundary. On the other side, the elliptic Cauchy problem is to solve a data completion problem with missing boundary conditions on the remaining parts of the domain boundary. It is well-known that the solution of the elliptic Cauchy problem \eqref{1} (if it exists) must be unique, provided that $\Gamma_d\cap\Gamma_n$ is a nontrivial portion of $\partial \Omega$. Throughout this paper, we assume that the Cauchy data is compatible so that the solution of the elliptic Cauchy problem (\ref{1}) exists; furthermore, we assume that $\Gamma_d\cap\Gamma_n$ is a nontrivial portion of $\partial \Omega$ so that the solution of the elliptic Cauchy problem (\ref{1}) is unique.

The elliptic Cauchy problems arise from various applications in science and engineering, such as vibration, wave propagation, geophysics, electromagnetic scattering,  steady-state inverse heat conduction, cardiology and nondestructive testing; etc. Readers are referred to the ``Introduction Section'' in \cite{ww2018} and the references cited therein for a detailed description of the elliptic Cauchy problems.

This paper aims to devise a new numerical scheme for the elliptic Cauchy problem \eqref{1} by using the newly developed primal-dual weak Galerkin (PD-WG) finite element method \cite{ww2016, ww2017, ww2018}. The new scheme is different from the one introduced and analyzed in \cite{ww2018} although both aim to solve numerically the elliptic Cauchy problem under the general framework of the PD-WG finite element method. The main difference between them is that the present approach is based on weak gradients while the scheme in \cite{ww2018} is based on weak Laplacians. As a result, the weak finite element space consisting of piecewise linear functions is applicable in the present approach, but not in \cite{ww2018}, as the Laplacian of linear functions would be vanishing. In addition, new mathematical tools (namely, methods based on generalized inf-sup conditions) must be introduced in order to establish a mathematical theory for the new scheme.

Let us now briefly introduce the essential ideas behind the PD-WG finite element method for solving the elliptic Cauchy problem \eqref{1}. Denote by $\Gamma_n^c= \partial\Omega \setminus \Gamma_n$, and by $H^1_{0,\Gamma_n^c}(\Omega)$ the subspace of $H^1(\Omega)$ consisting of functions with homogeneous boundary value on $\Gamma_n^c$; i.e.,
$$
H^1_{0,\Gamma_n^c}(\Omega)=\{v\in H^1(\Omega): v=0 \ \text{on}\  \Gamma_n^c \}.
$$
A weak formulation for the elliptic Cauchy problem \eqref{1} would find $u\in H^{1}(\Omega)$ such that $u= g_1$ on $\Gamma_d$ and
\begin{equation}\label{weakform-01}
(a\nabla u,\nabla w)=\langle g_2, w\rangle_{\Gamma_n}+(f, w) \qquad \forall w\in H^1_{0,\Gamma_n^c}, \end{equation}
where $\langle\cdot, \cdot\rangle_{\Gamma_n}$ stands for the pairing between $H^{\frac12}_{00}(\Gamma_n)$ and $(H^{\frac12}_{00}(\Gamma_n))'$. The weak formulation (\ref{weakform-01}) is different from the one employed in \cite{ww2018}, and shall result in a new numerical scheme different from the one in \cite{ww2018} although both use the general framework of the primal-dual approach.

Using the weak gradient operator $\nabla_w$ introduced originally in \cite{wy3655}, one may reformulate \eqref{weakform-01} as follows
\begin{equation}\label{weakform-02}
(a\nabla_w \{u\},\nabla_w \{w\})=\langle g_2, w\rangle_{\Gamma_n}+(f, w) \qquad \forall w\in H^1_{0,\Gamma_n^c},
\end{equation}
where $\{u\} = \{u|_T, u|_\pT\}$ and $\{w\} = \{w|_T, w|_\pT\}$  are the weak functions (see Section \ref{Section:WGFEM} for the definition). The weak functions are then approximated by piecewise polynomials on each element $T$ and its boundary $\pT$. Note that no continuity requirement is necessary between the information in the element $T$ and on its boundary $\pT$. The weak gradient operator $\nabla_w$ is further discretized by using vector-valued polynomials, denoted as $\nabla_{w, h}$ (see $\nabla_{w, r, T}$ in Section \ref{Section:WGFEM} for its precise definition) so that the weak form (\ref{weakform-02}) can be approximated by
\begin{equation}\label{EQ:10-12-2015:01}
(a\nabla_{w, h} u,\nabla_{w, h} w)=\langle g_2, w\rangle_{\Gamma_n}+(f,w_0) \qquad \forall w\in V^{h}_{0, \Gamma_n^c},
\end{equation}
where $V^{h}_{0, \Gamma_n^c}$ is a test space consisting of weak finite element functions with proper boundary values. However, the discrete problem (\ref{EQ:10-12-2015:01}) is not well-posed unless the {\em inf-sup} condition of Babu\u{s}ka \cite{babuska} and Brezzi \cite{b1974} is satisfied. The primal-dual formulation is thus developed to overcome this difficulty through a strategy that couples \eqref{EQ:10-12-2015:01} with its dual equation which seeks $\lambda_h\in V^{h}_{0, \Gamma_n^c}$ satisfying
\begin{equation}\label{EQ:09-12-2018:01}
(a\nabla_{w, h} v, \nabla_{w, h} \lambda_h) =0\qquad
\forall v \in V^{h}_{0, \Gamma_d}.
\end{equation}
A formal coupling between \eqref{EQ:10-12-2015:01} and \eqref{EQ:09-12-2018:01} can be accomplished via a stabilizer, denoted as $s(v,v)$, designed to measure the level of ``continuity" of $v\in V^{h}$ in the sense that $v\in V^{h}$ is
a classical $C^0$-conforming element if and only if $s(v,v)=0$.
The resulting scheme seeks $u_h\in V^{h}$ and $\lambda_h\in V^{h}_{0, \Gamma_n^c}$ satisfying $u_b=Q_b g_1$ on $\Gamma_d$, and the following equations:
\begin{equation}\label{primal-dual-wg}
\left\{
\begin{split}
s(u_h, v) - (a\nabla_{w, h} v, \nabla_{w, h} \lambda_h) &=0\qquad\qquad
\forall v \in V^{h}_{0, \Gamma_d},\\
s(\lambda_h, w) + (a\nabla_{w,h} u_h, \nabla_{w,h} w) &= (f, w_0)+\langle g_2, w_b\rangle_{\Gamma_n}\qquad\forall
 w \in V^{h}_{0, \Gamma_n^c},
 \end{split}\right.
\end{equation}
where $s(\cdot,\cdot)$ is a bilinear form in the weak finite element space $V^{h}$ known as the {\em stabilizer} or {\em smoother} that enforces certain weak continuity for the approximation $u_h$ and $\lambda_h$. Numerical schemes in the form of (\ref{primal-dual-wg}) have been named {\em primal-dual weak Galerkin finite element methods} in \cite{ww2016, ww2017, ww2018}, and they are also known as {\em stabilized finite element methods} in \cite{Burman, ErikBurman-EllipticCauchy-SIAM02, ErikBurman-EllipticCauchy} in different finite element contexts.

The primal-dual weak Galerkin finite element method \eqref{primal-dual-wg} has shown promising features as a discretization approach in the following aspects: (1) it offers a symmetric and well-posed problem for the ill-posed elliptic Cauchy problem; (2) it is consistent in the sense that the exact solution (if it exists) satisfies the system; (3) it works well for a wide class of PDE problems for which no traditional variational formulations are available; and (4) it admits general finite element partitions consisting of arbitrary polygons or polyhedra.

The paper is organized as follows. In Section \ref{Section:WGFEM}, we introduce a primal-dual weak Galerkin finite element scheme for solving the elliptic Cauchy problem (\ref{1}). In Section \ref{Section:Stability}, we present some technical estimates and a generalized inf-sup condition useful for a mathematical study of the new algorithm. In Section \ref{Section:EE}, we derive an error equation for the numerical solutions obtained from the primal-dual weak Galerkin algorithm devised in Section \ref{Section:WGFEM}. In Section \ref{Section:ErrorEstimate}, we establish an optimal order error estimate for the primal-dual WG finite element approximations in a scaled residual norm. In Section \ref{Section:L2Error}, a convergence theory in a weak $L^2$ topology is presented under a certain regularity assumption for the elliptic Cauchy problem. Finally in Section \ref{Section:NE}, we report some numerical results to demonstrate the efficiency and accuracy of our new PD-WG finite element method.

We follow the usual notations for Sobolev spaces and norms. For any open bounded domain $T\subset \mathbb{R}^d$ ($d$-dimensional Euclidean space) with Lipschitz continuous boundary, we use $\|\cdot\|_{s,T}$ and $|\cdot|_{s,T}$ to denote the norm and seminorm in Sobolev space $H^s(T)$ for any $s\ge 0$, respectively. The inner product in $H^s(T)$ is denoted by $(\cdot,\cdot)_{s,T}$. The space $H^0(T)$ coincides with $L^2(T)$, for which the norm and the inner product are denoted by $\|\cdot \|_{T}$ and $(\cdot,\cdot)_{T}$, respectively. For the case that $T=\Omega$, we shall drop the subscript $T$ in the norm and inner product notations. Throughout the paper, $C$ appearing in different places stands for different constants.

\section{Primal-Dual Weak Galerkin}\label{Section:WGFEM}
Denote by ${\cal T}_h$ a finite element partition of the domain $\Omega \subset \mathbb R^d (d=2, 3)$ into polygons in 2D or polyhedra in 3D which is shape regular in the sense described in \cite{wy3655}. A weak function on the element $T\in {\cal T}_h$ refers to a pair $v=\{v_0,v_b \}$ where $v_0\in L^2(T)$  and $v_b\in L^{2}(\partial T)$. The component $v_0$ can be understood as the value of $v$ in $T$, and the other component $v_b$ represents the value of $v$ on the boundary $\partial T$. Note that $v_b$ may not necessarily be the trace of $v_0$  on $\partial T$, though $v_b=v_0|_{\partial T}$ would be a feasible option. Denote by $\W(T)$ the space of all weak functions on $T$; i.e.,
\begin{equation*}\label{2.1}
\W(T):=\{v=\{v_0,v_b \}: v_0\in L^2(T), v_b\in
L^{2}(\partial T) \}.
\end{equation*}

Denote by $P_r(T)$ the set of polynomials on $T$ with degree no more than $r\ge 0$. A discrete weak gradient of $v\in \W(T)$, denoted as $\nabla_{w,r,T}v$, is defined as the unique polynomial vector in $[P_r(T) ]^d$ satisfying
\begin{equation}\label{disgradient}
(\nabla_{w,r,T}  v, \boldsymbol{\psi})_T=-(v_0,\nabla \cdot \boldsymbol{\psi})_T+\langle v_b, \boldsymbol{\psi} \cdot \textbf{n}\rangle_{\partial T} \quad\forall\boldsymbol{\psi}\in [P_r(T)]^d,
\end{equation}
which, from the usual integration by parts, gives
\begin{equation}\label{disgradient*}
(\nabla_{w,r,T} v, \boldsymbol{\psi})_T= (\nabla v_0, \boldsymbol{\psi})_T-\langle v_0- v_b, \boldsymbol{\psi} \cdot \textbf{n}\rangle_{\partial T} \quad\forall\boldsymbol{\psi}\in [P_r(T)]^d
\end{equation}
provided that $v_0\in H^1(T)$. The concept of discrete weak gradient was introduced originally in \cite{WangYe_2013,wy3655}.

Denote by ${\mathcal E}_h$ the set of all edges or flat faces in ${\cal T}_h$ and  ${\mathcal E}_h^0={\mathcal E}_h \setminus \partial\Omega$ the set of all interior edges or flat faces. Denote by $h_T$ the meshsize of $T\in {\cal T}_h$ and $h=\max_{T\in {\cal T}_h}h_T$ the meshsize for the partition ${\cal T}_h$.

For any given integer $k\geq 1$, denote by $V_k(T)$ the local discrete weak function space given by
$$
V_k(T)=\{\{v_0,v_b \}:v_0\in P_k(T),v_b\in
P_{k}(e), e\subset \partial T\}.
$$
Patching $V_k(T)$ over all the elements $T\in {\cal T}_h$ through a common value $v_b$ on the interior interface $\E_h^0$ gives rise to a weak finite element space $V^h$; i.e.,
$$
V^h=\big\{\{v_0,v_b \}:\{v_0,v_b \}|_T\in
V_k(T), \forall T\in {\cal T}_h
 \big\}.
$$
Denote by $V^h_{0,\Gamma_d}$ and $V^h_{0,\Gamma_n^c}$ the subspaces of $V^h$ with vanishing boundary value for $v_b$ on $\Gamma_d$ and $\Gamma_n^c$,  respectively; i.e.,
$$
V^h_{0,\Gamma_d}=\big\{\{v_0,v_b \}\in V^h:  v_b|_e=0,   e\subset \Gamma_d
 \big\},
$$
$$
V^h_{0,\Gamma_n^c}=\big\{\{v_0,v_b \}\in V^h:  v_b|_e=0,   e\subset \Gamma_n^c
 \big\}.
$$

\subsection{Algorithm}
For simplicity of notation and without confusion,  for any $\sigma\in V_h$, denote by $\nabla_{w}\sigma$  the discrete weak gradient $\nabla _{w,k-1,T}\sigma$ computed  by using (\ref{disgradient})  on each element $T$; i.e.,
$$
(\nabla _{w} \sigma)|_T=\nabla _{w,k-1,T}(\sigma|_T), \qquad
\sigma\in V^h.
$$

For any $u$, $v\in V^h$, we introduce the following bilinear forms
\begin{align*}
s(u, v)=&\sum_{T\in {\cal T}_h}h_T^{-1}
\langle  u_0-u_b,  v_0-v_b\rangle_{\partial T},
\\
b(u,v)=&\sum_{T\in {\cal T}_h}( a\nabla_w u, \nabla_w v)_T.
\end{align*}

Let $k\geq 1$ be an integer. For each element $T\in {\cal T}_h$, denote by $Q_0$ the $L^2$ projection onto $P_k(T)$. Denote by $Q_b$ the $L^2$ projection onto $P_{k}(e) $ for each edge or flat face $e\in {\cal E}_h$. For any $w\in H^1(\Omega)$, denote by $Q_h w$ the $L^2$  projection onto the weak finite element space $V^h$ such that on each element $T$,
$$
Q_hw=\{Q_0w,Q_bw\}.
$$
Denote by ${\cal Q}_h$ the $L^2$ projection onto the space of piecewise polynomials of degree $k-1$.

The numerical scheme for the elliptic Cauchy model problem (\ref{1})  based on the variational formulation (\ref{weakform-01}) by using primal-dual  weak Galerkin strategy is as follows:
\begin{algorithm}
Find $(u_h;\lambda_h)\in V^h
\times V_{0,\Gamma_n^c}^h$ satisfying $u_b= Q_b g_1$ on $\Gamma_d$,  such that
\begin{eqnarray}\label{32}
s( u_h, v)-b(v, \lambda_h)&=&0,\qquad\qquad\qquad\quad\qquad \forall v \in V_{0,\Gamma_d}^h,\\
s ( \lambda_h, w)+b(u_h, w)&=&(f,w_0)+\langle g_2,w_b\rangle_{\Gamma_n}, \quad \forall w\in V_{0,\Gamma_n^c}^h. \label{2}
\end{eqnarray}
\end{algorithm}

\subsection{Solvability}
The following is a well-known result on the solution's uniqueness for elliptic Cauchy problems, see \cite{Gilbarg-Trudinger} for reference.

\begin{lemma}\label{unilem}  Assume that $\Omega$ is an open bounded and connected domain in $\mathbb R^d \ (d=2,3)$ with Lipschitz continuous boundary $\partial\Omega$. Denote by $\Gamma_d$ the portion of the Dirichlet boundary and $\Gamma_n$ the Neumann portion. Assume that $\Gamma_d\cap \Gamma_n$ is a non-trivial portion of $\partial\Omega$. Then, the solutions of the following elliptic Cauchy problem, if they exist, are unique
\begin{equation*}
\begin{split}
-\nabla\cdot(a\nabla u)=&f, \qquad \ \text{in} \quad \Omega,\\
  u=&g_1,\qquad \text{on} \quad   \Gamma_d,\\
a\nabla u \cdot \bn=&g_2, \qquad \text{on} \quad  \Gamma_n.
\end{split}
\end{equation*}
\end{lemma}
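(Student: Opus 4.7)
The plan is to reduce to uniqueness of the homogeneous problem and then invoke the unique continuation principle for elliptic operators. Suppose $u_1$ and $u_2$ both solve the given Cauchy problem, and set $u = u_1 - u_2 \in H^1(\Omega)$. By linearity, $u$ satisfies
\begin{equation*}
-\nabla\cdot(a\nabla u) = 0 \ \text{in}\ \Omega, \qquad u = 0 \ \text{on}\ \Gamma_d, \qquad a\nabla u \cdot \bn = 0 \ \text{on}\ \Gamma_n.
\end{equation*}
The goal is to show $u \equiv 0$ in $\Omega$.

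First I would exploit the assumption that $\Gamma_0 := \Gamma_d \cap \Gamma_n$ is a nontrivial (i.e., relatively open) portion of $\partial\Omega$. On $\Gamma_0$, \emph{both} Cauchy data vanish: $u = 0$ and $a\nabla u \cdot \bn = 0$ simultaneously. Next, I would choose a small open neighborhood $U$ of some interior point of $\Gamma_0$ such that $U \cap \partial\Omega \subset \Gamma_0$, form the enlarged open set $\tilde\Omega := \Omega \cup U$, and extend $u$ by zero across $\Gamma_0$ to a function $\tilde u$ on $\tilde\Omega$. Using the vanishing Dirichlet trace on $\Gamma_0$, the extension $\tilde u$ lies in $H^1(\tilde\Omega)$; using the vanishing conormal derivative on $\Gamma_0$, one checks by integration against an arbitrary test function $\varphi \in C_c^\infty(\tilde\Omega)$ that the boundary contributions cancel, so $\tilde u$ is a weak solution of $-\nabla\cdot(\tilde a \nabla \tilde u) = 0$ in $\tilde\Omega$ with $\tilde a$ any symmetric, uniformly positive definite extension of $a$.

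Since $\tilde u \equiv 0$ on the open set $U \setminus \overline\Omega \subset \tilde\Omega$, the strong unique continuation principle for second-order elliptic operators with symmetric, bounded, uniformly positive definite (and sufficiently regular, e.g.\ Lipschitz) coefficients then forces $\tilde u \equiv 0$ on the connected component of $\tilde\Omega$ containing $U \setminus \overline\Omega$. Provided $\Omega$ is connected (as assumed in the statement), this yields $u \equiv 0$ throughout $\Omega$, hence $u_1 = u_2$.

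The main obstacle is the unique continuation step: it is the nontrivial analytic ingredient and depends on regularity of $a$. For smooth (or real-analytic) $a$ it follows from Aronszajn's theorem or Holmgren's theorem, while under the paper's mild hypothesis that $a$ is merely symmetric, bounded, and uniformly positive definite one would invoke the Aronszajn--Krzywicki--Szarski result or Hörmander-type Carleman estimates; here I would simply appeal to the cited reference \cite{Gilbarg-Trudinger} rather than reproving unique continuation. Everything else—linearization, zero-extension across $\Gamma_0$, and verification that $\tilde u$ is a weak solution—is routine once the geometric setup is fixed.
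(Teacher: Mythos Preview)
The paper does not give its own proof of this lemma: it is stated as ``a well-known result on the solution's uniqueness for elliptic Cauchy problems'' with a bare citation to \cite{Gilbarg-Trudinger}, and no argument is supplied. So there is nothing to compare against at the level of technique.

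Your sketch is the standard proof and is correct in outline: linearize to the homogeneous problem, use the overlap $\Gamma_0 = \Gamma_d \cap \Gamma_n$ where both Cauchy data vanish to extend $u$ by zero across a patch of the boundary into an $H^1$ weak solution on an enlarged domain, and then invoke unique continuation from the open set where the extension vanishes identically. You are also right to flag that the unique continuation step is the only nontrivial ingredient and that it requires some regularity of $a$ beyond mere boundedness and ellipticity (Lipschitz suffices via Aronszajn--Krzywicki--Szarski or Carleman estimates; for merely $L^\infty$ coefficients there are counterexamples in dimension $d\ge 3$). The paper's standing assumptions on $a$ do not explicitly include Lipschitz continuity, so strictly speaking one is implicitly assuming whatever regularity the cited reference needs---which is exactly the caveat you already raised. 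Beyond that, your argument is complete and is precisely what a proof of this lemma looks like.
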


\begin{lemma}\label{Lemma5.1} \cite{WangYe_2013,wy3655} The $L^2$ projection operators $Q_h$ and ${\cal Q}_h$ satisfy the following commutative property:
\begin{equation}\label{l}
\nabla _{w}(Q_h u) = {\cal Q}_h(\nabla u),\qquad u\in H^1(T).
\end{equation}
\end{lemma}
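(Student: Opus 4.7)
The plan is to verify the identity element-by-element by testing both polynomials $\nabla_w(Q_h u)|_T$ and $\mathcal{Q}_h(\nabla u)|_T$, which both live in $[P_{k-1}(T)]^d$, against an arbitrary test vector $\boldsymbol{\psi}\in[P_{k-1}(T)]^d$. If the two $L^2(T)$-inner products agree for every such $\boldsymbol{\psi}$, the identity follows by the uniqueness of $L^2$-representatives in a finite-dimensional space.

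First I would expand the left-hand side using the definition (\ref{disgradient}) of the discrete weak gradient applied to the weak function $Q_h u=\{Q_0 u, Q_b u\}$:
\begin{equation*}
(\nabla_w(Q_h u),\boldsymbol{\psi})_T = -(Q_0 u,\nabla\cdot\boldsymbol{\psi})_T+\langle Q_b u,\boldsymbol{\psi}\cdot\mathbf{n}\rangle_{\partial T}.
\end{equation*}
Since $\nabla\cdot\boldsymbol{\psi}\in P_{k-2}(T)\subset P_k(T)$, the defining property of the $L^2$ projection $Q_0$ onto $P_k(T)$ gives $(Q_0 u,\nabla\cdot\boldsymbol{\psi})_T=(u,\nabla\cdot\boldsymbol{\psi})_T$. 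Similarly, on each edge or face $e\subset\partial T$ the trace $\boldsymbol{\psi}\cdot\mathbf{n}|_e$ lies in $P_{k-1}(e)\subset P_k(e)$, so the $L^2$-projection property of $Q_b$ yields $\langle Q_b u,\boldsymbol{\psi}\cdot\mathbf{n}\rangle_{\partial T}=\langle u,\boldsymbol{\psi}\cdot\mathbf{n}\rangle_{\partial T}$.

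Combining these two reductions and applying integration by parts, which is legitimate since $u\in H^1(T)$, I obtain
\begin{equation*}
(\nabla_w(Q_h u),\boldsymbol{\psi})_T = -(u,\nabla\cdot\boldsymbol{\psi})_T+\langle u,\boldsymbol{\psi}\cdot\mathbf{n}\rangle_{\partial T} = (\nabla u,\boldsymbol{\psi})_T.
\end{equation*}
On the right-hand side, because $\boldsymbol{\psi}\in[P_{k-1}(T)]^d$ and $\mathcal{Q}_h$ is the $L^2$ projection onto piecewise polynomials of degree $k-1$, the defining property of $\mathcal{Q}_h$ gives $(\mathcal{Q}_h(\nabla u),\boldsymbol{\psi})_T=(\nabla u,\boldsymbol{\psi})_T$. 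The two inner products therefore agree for every $\boldsymbol{\psi}\in[P_{k-1}(T)]^d$, which finishes the argument.

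There is no real obstacle here: the proof is essentially bookkeeping, and the only thing to watch is the compatibility of polynomial degrees so that the defining properties of $Q_0$, $Q_b$, and $\mathcal{Q}_h$ can be invoked legitimately against $\nabla\cdot\boldsymbol{\psi}$ and $\boldsymbol{\psi}\cdot\mathbf{n}$. The regularity assumption $u\in H^1(T)$ is used exactly once, to justify the integration by parts that converts the boundary–volume expression into $(\nabla u,\boldsymbol{\psi})_T$.
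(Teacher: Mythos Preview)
Your argument is correct and is exactly the standard proof of this commutative identity; the paper itself does not give a proof but cites \cite{WangYe_2013,wy3655}, where the same computation appears. The only minor remark is that the degree checks you carry out (that $\nabla\cdot\boldsymbol{\psi}\in P_{k-2}(T)\subset P_k(T)$ and $\boldsymbol{\psi}\cdot\mathbf{n}|_e\in P_{k-1}(e)\subset P_k(e)$) are precisely what makes the argument go through, and you have stated them cleanly.
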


\begin{theorem}\label{thmunique1} Assume that $\Gamma_d\cap\Gamma_n$ contains a nontrivial portion of the domain boundary $\partial\Omega$ and $\Gamma_d\cap \Gamma_n\Subset \partial\Omega$ is a proper closed subset.  The primal-dual weak Galerkin algorithm (\ref{32})-(\ref{2}) has a unique solution.
\end{theorem}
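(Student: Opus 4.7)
The plan is to exploit the fact that (\ref{32})--(\ref{2}) is a square linear system: a quick count shows that both the number of equations and the number of unknowns equal $\dim V^h_{0,\Gamma_d} + \dim V^h_{0,\Gamma_n^c}$, so uniqueness and existence are equivalent. It therefore suffices to prove that the homogeneous problem ($f=0$, $g_1=0$, $g_2=0$) admits only the trivial solution $(u_h,\lambda_h)=(0,0)$, and I would accomplish this by reducing the homogeneous identities to two classical elliptic Cauchy problems and then invoking Lemma \ref{unilem} twice.

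First, because $u_b = Q_b g_1 = 0$ on $\Gamma_d$ in the homogeneous setting, $u_h$ itself lies in $V^h_{0,\Gamma_d}$ and is admissible as the test function $v$ in (\ref{32}); likewise $\lambda_h\in V^h_{0,\Gamma_n^c}$ is admissible as $w$ in (\ref{2}). Taking $v=u_h$ and $w=\lambda_h$ and adding the two identities cancels the mixed $b(u_h,\lambda_h)$ term and yields
\[
s(u_h,u_h) + s(\lambda_h,\lambda_h) = 0.
\]
Since $s(\sigma,\sigma)=\sum_T h_T^{-1}\|\sigma_0-\sigma_b\|_{\partial T}^2\ge 0$, this forces $u_0|_{\partial T}=u_b$ and $\lambda_0|_{\partial T}=\lambda_b$ on every $T$. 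Combined with the boundary values, $u_0$ becomes a $C^0$ piecewise $P_k$ function vanishing on $\Gamma_d$, and $\lambda_0$ is $C^0$ piecewise $P_k$ vanishing on $\Gamma_n^c=\partial\Omega\setminus\Gamma_n$. With this identification $u_h=Q_h u_0$ and $\lambda_h=Q_h\lambda_0$, so the commutativity property (Lemma \ref{Lemma5.1}) collapses $\nabla_w u_h$ to the piecewise classical gradient $\nabla u_0$, and similarly $\nabla_w \lambda_h$ to $\nabla\lambda_0$ (the projection ${\cal Q}_h$ is the identity on $[P_{k-1}(T)]^d$).

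Next, eliminating the now-vanishing $s$-terms from the homogeneous versions of (\ref{32})--(\ref{2}) leaves $b(v,\lambda_h)=0$ for all $v\in V^h_{0,\Gamma_d}$ and $b(u_h,w)=0$ for all $w\in V^h_{0,\Gamma_n^c}$. I would unpack the latter by using definition (\ref{disgradient}) to move derivatives from $\nabla_w w$ onto the piecewise polynomial $a\nabla u_0\in[P_{k-1}(T)]^d$; then varying $w_0$ on element interiors forces $-\nabla\!\cdot\!(a\nabla u_0)=0$ in each $T$, varying $w_b$ on interior edges produces flux continuity $\ljump a\nabla u_0\cdot\bn\rjump=0$, and varying $w_b$ on the portion of $\partial\Omega$ where it is free (namely $\Gamma_n$) delivers the natural boundary condition $a\nabla u_0\cdot\bn=0$ on $\Gamma_n$. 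Together with $u_0=0$ on $\Gamma_d$ from the previous step, Lemma \ref{unilem}---whose nontrivial-overlap hypothesis on $\Gamma_d\cap\Gamma_n$ is part of the theorem's assumptions---forces $u_0\equiv 0$, hence $u_h=0$. A parallel expansion of $b(v,\lambda_h)=0$ (with the admissible $v_b$ now unconstrained on $\partial\Omega\setminus\Gamma_d$ rather than on $\Gamma_n$) yields an analogous elliptic Cauchy problem for $\lambda_0$ with $\lambda_0=0$ on $\partial\Omega\setminus\Gamma_n$ and $a\nabla\lambda_0\cdot\bn=0$ on $\partial\Omega\setminus\Gamma_d$; a second application of Lemma \ref{unilem} then gives $\lambda_0\equiv 0$.

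The main obstacle I anticipate is the bookkeeping in the third paragraph. After the integration by parts one must split the boundary integral into element interiors, interior edges, and the various disjoint portions of $\partial\Omega$, and correctly identify which boundary pieces retain a Dirichlet-type constraint and which retain a Neumann-type one once the test function is constrained to $V^h_{0,\Gamma_d}$ or $V^h_{0,\Gamma_n^c}$. Once that accounting is carried out cleanly so that both reduced boundary-value problems fit into the Cauchy-uniqueness framework of Lemma \ref{unilem}, the uniqueness---and hence the existence---of the solution to (\ref{32})--(\ref{2}) follows.
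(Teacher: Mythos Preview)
Your proposal is correct and mirrors the paper's proof: reduce to the homogeneous system, test with $(u_h,\lambda_h)$ to kill the stabilizers, collapse $\nabla_w$ to the classical gradient once $u_0=u_b$ and $\lambda_0=\lambda_b$, then extract two Cauchy problems and invoke Lemma~\ref{unilem} twice. The one point the paper makes explicit that you leave in the ``bookkeeping'' pile is that the second invocation (for $\lambda_0$, with Dirichlet data on $\Gamma_n^c$ and Neumann data on $\Gamma_d^c$) requires $\Gamma_n^c\cap\Gamma_d^c=(\Gamma_d\cup\Gamma_n)^c$ to be a nontrivial portion of $\partial\Omega$, and this is precisely what the theorem's second hypothesis supplies.
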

\begin{proof} As the number of equations is the same as the number of unknowns, it suffices to show that the homogeneous problem  (\ref{32})-(\ref{2}) has only the trivial solution. To this end, we assume $f= 0$, $g_1=0$ and  $g_2=0$ in (\ref{32})-(\ref{2}). By letting $v=u_h$ and $w=\lambda_h$, the sum of (\ref{2}) and (\ref{32}) gives
$$
s(u_h, u_h)+ s(\lambda_h, \lambda_h)=0,
$$
which implies $u_0=u_b$ and  $\lambda_0= \lambda_b$ on each $\partial T$. Thus, we arrive at $u_0\in C^0(\Omega)$ and $ \lambda_0\in C^0(\Omega)$. Note that $u_b=0$ on $\Gamma_d$ and $ \lambda_b=0$ on $\Gamma_n^c$. Thus, (\ref{2}) can be rewritten as follows
$$
\sum_{T\in {\cal T}_h}(a\nabla_w u_h, \nabla_w w)_T=0, \qquad \forall w\in  V_{0,\Gamma_n^c}^h,
$$
which, by letting  $\boldsymbol{\psi}= \nabla_w u_h$ in  (\ref{disgradient}), gives rise to
\begin{equation}\label{uni}
\begin{split}
0=&\sum_{T\in {\cal T}_h} (a\nabla_w u_h, \nabla_w w)_T \\
=&\sum_{T\in {\cal T}_h} - (\nabla \cdot (a\nabla_w u_h), w_0)_T+\langle  a\nabla_w u_h \cdot \bn, w_b\rangle_{\partial T}\\
=&\sum_{T\in {\cal T}_h} - (\nabla \cdot (a\nabla_w u_h), w_0)_T+\sum_{e\in \E_h\setminus\Gamma_n^c}\langle  \ljump a\nabla_w u_h \cdot \bn\rjump, w_b\rangle_{e},
\end{split}
\end{equation}
where we have used $w_b=0$ on $\Gamma_n^c$. By letting $w_b=0$ on each edge $e\in \E_h \setminus \Gamma_n^c$ in (\ref{uni}) and $w_0=\nabla \cdot (\nabla_w u_h)$, we arrive at $\nabla \cdot (a\nabla_w u_h)=0$ on each element $T\in {\cal T}_h$. Similarly, by letting $w_0=0$ on each element $T\in {\cal T}_h$ in (\ref{uni}), we obtain $ \ljump \nabla_w u_h \cdot \bn\rjump=0$ on each edge or face $e\in \E_h\setminus\Gamma_n^c$.

Note that $u_0=u_b$ on each $\partial T$.   It follows
from (\ref{disgradient*}) that  for any $\textbf{q}\in  [P _{k-1}(T) ]^d$, we have \begin{equation*}
\begin{split}
(\nabla_w u_h, \textbf{q})_T&=(\nabla u_0, \textbf{q})_T+\langle \textbf{q} \cdot \bn, u_b-u_0\rangle_{\partial T} \\
 &=(\nabla u_0, \textbf{q})_T,
\end{split}
\end{equation*}
which gives rise to  $\nabla_w u_h=\nabla u_0$ on each element $T\in {\cal T}_h$. This implies that $\nabla\cdot (a\nabla u_0)=\nabla\cdot (a\nabla_w u_h) =0$ on each element $T\in {\cal T}_h$. Using $ \ljump \nabla_w u_h \cdot \bn\rjump=0$ on each edge or face $e\in \E_h \setminus \Gamma_n^c$, we arrive at  $ \ljump \nabla  u_0 \cdot \bn\rjump=0$  on each $e\in \E_h\setminus\Gamma_n^c$. Note that we have $u_0=0$ on $\Gamma_d$ and $\nabla u_0\cdot \bn=0$ on $\Gamma_n$, and $\Gamma_d\cap \Gamma_n$ is non-trivial portion of $\partial\Omega$. Thus, from Lemma \ref{unilem} we obtain $u_0\equiv 0$  in $\Omega$. Using $ u_0=u_b$ on each $\partial T$ gives $u_b=0$ on each  $\partial T$. Thus, $u_h\equiv 0$ in $\Omega$.

Since $\Gamma_d \cup \Gamma_n \Subset\partial \Omega$ is a closed proper subset, then $\Gamma_d^c \cap \Gamma_n^c= (\Gamma_d \cup \Gamma_n)^c$ contains a nontrivial portion of $\partial \Omega$. A similar argument can be made to show that $\lambda_h\equiv 0$ in $\Omega$. This completes the proof of the theorem.
\end{proof}

\section{Some Technical Estimates}\label{Section:Stability}
The goal of this section is to establish some technical results which are valuable in the error analysis of the primal-dual weak Galerkin finite element method (\ref{32})-(\ref{2}) for solving the elliptic Cauchy problem (\ref{1}).

In the weak finite element space $V^h$, we introduce four semi-norms as follows:
\begin{equation}\label{norm1}
\3bar v  \3bar_{ \Gamma_d}=\Big( \sum_{T\in {\cal T}_h} h_T^2 \| \nabla\cdot (a\nabla v_0)\|_T^2 + \sum_{e\in \E_h\setminus{\Gamma_n^c}}  h_T \|\ljump a\nabla v_0\cdot\bn\rjump\|_e^2+ s(v, v)\Big)^{\frac{1}{2}},
\end{equation}

\begin{equation}\label{norm}
\3bar v \3bar _{ h, \Gamma_d }=\Big(\sum_{T\in {\cal T}_h}  h_T^2\| \nabla\cdot (a\nabla_w v )\|_T^2 + \sum_{e\in \E_h\setminus \Gamma_n^c}  h_T\|\ljump a\nabla_w v \cdot\bn\rjump\|_e^2+ s(v , v)\Big)^{\frac{1}{2}},
\end{equation}

\begin{equation}\label{normgamma1}
\3bar \lambda \3bar_{ \Gamma_n^c }=\Big( \sum_{T\in {\cal T}_h} h_T^2  \| \nabla\cdot (a\nabla \lambda_0)\|_T^2 +\sum_{e\in\E_h\setminus{\Gamma_d}} h_T \|\ljump a\nabla \lambda_0\cdot\bn\rjump\|_e^2+ s(\lambda , \lambda)\Big)^{\frac{1}{2}},
\end{equation}

\begin{equation}\label{normgamma11}
\3bar \lambda  \3bar_{h, \Gamma_n^c}=\Big(\sum_{T\in {\cal T}_h}  h_T^2\| \nabla\cdot (a\nabla_w \lambda )\|_T^2 +
 \sum_{e\in \E_h\setminus\Gamma_d} h_T \|\ljump a\nabla_w \lambda \cdot\bn\rjump\|_e^2 + s(\lambda , \lambda)\Big)^{\frac{1}{2}}. \end{equation}

\begin{lemma}\label{lemmanorm}
The semi-norm $\3bar \cdot \3bar_{\Gamma_d }$ defines a norm in the linear space $V^h_{0,\Gamma_d}$. Likewise, the semi-norm $\3bar \cdot \3bar_{\Gamma_n^c }$ defines a norm in the linear space $V^h_{0,\Gamma_n^c}$.
\end{lemma}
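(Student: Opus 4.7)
\medskip

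\noindent\textbf{Proposal.} Homogeneity and the triangle inequality are immediate from the definitions of the four seminorms, so the content of the lemma is positive-definiteness. My plan is to show, in each case, that the vanishing of the seminorm forces the $v_0$ (resp.\ $\lambda_0$) component to satisfy a homogeneous elliptic Cauchy problem on $\Omega$, and then invoke Lemma~\ref{unilem} to get $v_0 \equiv 0$, from which $v_b = 0$ follows via the stabilizer term.

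\medskip

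\noindent Fix $v = \{v_0,v_b\} \in V^h_{0,\Gamma_d}$ with $\3bar v \3bar_{\Gamma_d} = 0$. First, $s(v,v) = 0$ gives $v_0 = v_b$ on $\partial T$ for every $T \in \T_h$. Since $v_b$ is single-valued on each interior face, this upgrades $v_0$ to a function in $C^0(\overline\Omega)$; combined with $v_b = 0$ on $\Gamma_d$, it gives $v_0 = 0$ on $\Gamma_d$. Next, $h_T^2\|\nabla\cdot(a\nabla v_0)\|_T^2 = 0$ yields $\nabla\cdot(a\nabla v_0) = 0$ on each element. Finally, the jump term runs over $e \in \E_h \setminus \Gamma_n^c$, which is exactly the set of interior edges together with edges $e \subset \Gamma_n$; this simultaneously forces $\ljump a\nabla v_0 \cdot \bn \rjump = 0$ across all interior faces (so the flux $a\nabla v_0$ has a continuous normal component) and $a\nabla v_0 \cdot \bn = 0$ on $\Gamma_n$. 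Testing against a smooth function in $H^1(\Omega)$ and integrating by parts element by element, the interior flux jumps and interior residuals are all zero, so $v_0 \in H^1(\Omega)$ is a weak solution of
\begin{equation*}
-\nabla\cdot(a\nabla v_0) = 0 \text{ in } \Omega, \qquad v_0 = 0 \text{ on } \Gamma_d, \qquad a\nabla v_0 \cdot \bn = 0 \text{ on } \Gamma_n.
\end{equation*}
Because $\Gamma_d \cap \Gamma_n$ is a nontrivial portion of $\partial\Omega$, Lemma~\ref{unilem} yields $v_0 \equiv 0$ in $\Omega$, and then $v_b = v_0|_{\partial T} = 0$ on every $\partial T$. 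Thus $v = 0$.

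\medskip

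\noindent For the second assertion I would run a verbatim symmetric argument with the roles of $\Gamma_d$ and $\Gamma_n^c$ exchanged: for $\lambda \in V^h_{0,\Gamma_n^c}$ with $\3bar \lambda \3bar_{\Gamma_n^c} = 0$, the stabilizer again gives $\lambda_0 \in C^0(\overline\Omega)$ and, via $\lambda_b = 0$ on $\Gamma_n^c$, the Dirichlet condition $\lambda_0 = 0$ on $\Gamma_n^c \supseteq \Gamma_d$; the jump sum over $\E_h \setminus \Gamma_d$ gives flux continuity on interior faces and the Neumann condition $a\nabla \lambda_0 \cdot \bn = 0$ on $\partial\Omega \setminus \Gamma_d \supseteq \Gamma_n$. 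So $\lambda_0$ solves the homogeneous Cauchy problem with Dirichlet data on $\Gamma_n^c$ and Neumann data on $\Gamma_d^c$, and the Cauchy portion $\Gamma_n^c \cap \Gamma_d^c \supseteq \Gamma_d \cap \Gamma_n$ is nontrivial, so Lemma~\ref{unilem} again applies and $\lambda \equiv 0$.

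\medskip

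\noindent\textbf{Main obstacle.} The only nonroutine part is the passage from piecewise identities to a global weak solution of the PDE, since the uniqueness lemma is stated for PDE solutions, not for piecewise functions. The care needed is to track exactly which edges are in $\E_h \setminus \Gamma_n^c$ (it must cover both the interior faces, for flux continuity, and the Neumann boundary $\Gamma_n$, for the trace condition), and to recognize that on a boundary face the "jump" $\ljump a\nabla v_0 \cdot \bn\rjump$ collapses to the one-sided trace $a\nabla v_0 \cdot \bn$, which is what delivers the Neumann boundary condition. Once this is verified, the rest is just elementwise integration by parts.
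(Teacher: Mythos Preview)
Your argument for $\3bar\cdot\3bar_{\Gamma_d}$ is correct and matches the paper's proof essentially line for line: both extract $v_0\in C^0$, the elementwise PDE, the flux continuity, and the two boundary traces, then invoke Lemma~\ref{unilem}.

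The second paragraph, however, contains genuine set-theoretic errors. You write $\Gamma_n^c \supseteq \Gamma_d$, $\partial\Omega\setminus\Gamma_d \supseteq \Gamma_n$, and $\Gamma_n^c\cap\Gamma_d^c \supseteq \Gamma_d\cap\Gamma_n$. All three inclusions are false precisely because the standing hypothesis is that $\Gamma_d\cap\Gamma_n$ is \emph{nontrivial}: any point in $\Gamma_d\cap\Gamma_n$ lies in $\Gamma_n$, hence not in $\Gamma_n^c$, so $\Gamma_d\not\subseteq\Gamma_n^c$; and $\Gamma_n^c\cap\Gamma_d^c=(\Gamma_d\cup\Gamma_n)^c$ is \emph{disjoint} from $\Gamma_d\cap\Gamma_n$, not a superset. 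Consequently your invocation of Lemma~\ref{unilem} for $\lambda_0$ is unjustified as written.

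What is actually needed for the $\3bar\cdot\3bar_{\Gamma_n^c}$ case is the additional structural assumption that $\Gamma_d\cup\Gamma_n$ is a proper closed subset of $\partial\Omega$, so that the Cauchy overlap $\Gamma_n^c\cap\Gamma_d^c=(\Gamma_d\cup\Gamma_n)^c$ is itself a nontrivial portion of the boundary. The paper uses exactly this hypothesis in the proof of Theorem~\ref{thmunique1} when it handles $\lambda_h$, and it is implicitly in force here as well (the paper's own proof of the lemma merely says ``a similar argument'' for the second seminorm). With that hypothesis in hand, your outline goes through: $\lambda_0$ satisfies the homogeneous Cauchy problem with Dirichlet data on $\Gamma_n^c$ and Neumann data on $\Gamma_d^c$, and the nontriviality of $(\Gamma_d\cup\Gamma_n)^c$ is what licenses Lemma~\ref{unilem}. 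You should replace the incorrect inclusions by this assumption.
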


\begin{proof}
We only need to verify the positivity property of $\3bar \cdot\3bar_{ \Gamma_d}$. To this end, assume $\3bar v\3bar_{ \Gamma_d }=0$ for a given $v =\{v_0, v_b\} \in V^h_{0,\Gamma_d}$. It follows that $ v_0=v_b$ on each $\partial T$, $ \nabla \cdot(a\nabla v_0)=0$ on each element $T\in {\cal T}_h$, and $\ljump a\nabla v_0 \cdot \bn \rjump=0$ on each $e \in \E_h \setminus \Gamma_n^c$. Thus, $v_0\in C^0(\Omega)$ is a strong solution of $\nabla\cdot(a\nabla v_0)=0$ in $\Omega$. From $v_0=v_b$ on each $\partial T$ and $v_b=0$ on $\Gamma_d$, we obtain $v_0=0$ on $\Gamma_d$. Furthermore, from $\ljump a\nabla v_0 \cdot \bn \rjump=0$ on $ \E_h \setminus \Gamma_n^c$, we have $a\nabla v_0\cdot \bn=0$ on $\Gamma_n$. Thus, it follows from Lemma \ref{unilem} that $v_0\equiv 0$ in $\Omega$, which leads to $v_b\equiv 0$ on each $\partial T $ by using  $v_b=v_0$ $\partial T$. This shows that $v\equiv 0$ in $\Omega$. A similar argument can be made to show that $\3bar \cdot \3bar_{ \Gamma_n^c }$ defines a norm in the linear space $V^h_{0,\Gamma_n^c}$. This completes the proof of the lemma.
\end{proof}

On any element $T\in\T_h$, the following trace inequality holds true
\begin{equation}\label{traceinequality}
\|\varphi\|_e^2 \leq C
(h_T^{-1}\|\varphi\|_T^2+h_T\|\nabla\varphi\|_T^2)
\end{equation}
for $\varphi\in H^1(T)$; readers are referred to \cite{wy3655} for a derivation of \eqref{traceinequality} under the shape regularity assumption on the finite element partition $\T_h$. For polynomials $\varphi$ in the element $T\in \T_h$,  it follows from the inverse inequality (see also \cite{wy3655}) that
\begin{equation}\label{tracepolynomial}
\|\varphi\|_e^2 \leq C h_T^{-1}\|\varphi\|_T^2.
\end{equation}
Here $e$ is an edge or flat face on the boundary of $T$.

The following  Lemma  shows that the norms defined in (\ref{normgamma1}) and (\ref{normgamma11})  are indeed equivalent.
\begin{lemma} \label{equiva1}
There exist  $C_1>0$  and $C_2>0$ such that
\begin{equation}\label{equi}
C_1 \3bar \lambda  \3bar _{h,  \Gamma_n^c }\leq  \3bar \lambda  \3bar_{ \Gamma_n^c }\leq C_2  \3bar \lambda  \3bar _{h, \Gamma_n^c },
\end{equation}
 for all $\lambda \in V^h_{0,\Gamma_n^c}$.
\end{lemma}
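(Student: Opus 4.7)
The plan is to prove the equivalence (\ref{equi}) by bounding, element by element, the difference between the quantities appearing in $\3bar \lambda \3bar_{\Gamma_n^c}$ and $\3bar \lambda \3bar_{h,\Gamma_n^c}$. Since both norms share the stabilizer term $s(\lambda,\lambda)$, it suffices to control the difference between $\nabla \lambda_0$ and $\nabla_w \lambda$ on each $T$ in terms of $s(\lambda,\lambda)$, and then invoke the triangle inequality.

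The starting point is identity (\ref{disgradient*}). Since $\lambda_0\in P_k(T)$, the gradient $\nabla \lambda_0$ already lies in $[P_{k-1}(T)]^d$, so $\nabla_w \lambda-\nabla\lambda_0\in [P_{k-1}(T)]^d$ is an admissible test function in (\ref{disgradient*}). Choosing $\boldsymbol{\psi}=\nabla_w \lambda-\nabla\lambda_0$ yields
$$
\|\nabla_w \lambda-\nabla \lambda_0\|_T^2 \;=\; -\langle \lambda_0-\lambda_b,\,(\nabla_w \lambda-\nabla \lambda_0)\cdot \bn\rangle_{\partial T}.
$$
Cauchy--Schwarz together with the polynomial trace inequality (\ref{tracepolynomial}) then gives the key pointwise bound
$$
\|\nabla_w \lambda-\nabla \lambda_0\|_T \;\le\; C\, h_T^{-1/2}\, \|\lambda_0-\lambda_b\|_{\partial T}. \qquad (\star)
$$

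With $(\star)$ in hand, I would control the remaining divergence and jump quantities by polynomial inverse estimates. Writing $\nabla\cdot(a\mathbf{q})=a\,\nabla\cdot \mathbf{q}+\nabla a\cdot \mathbf{q}$ with $\mathbf{q}=\nabla_w\lambda-\nabla\lambda_0$, and using the boundedness of $a$ and $\nabla a$ on each element together with the polynomial inverse inequality $\|\nabla\cdot \mathbf{q}\|_T\le C h_T^{-1}\|\mathbf{q}\|_T$, one obtains
$$
h_T\,\|\nabla\cdot(a(\nabla_w \lambda-\nabla \lambda_0))\|_T \;\le\; C\, h_T^{-1/2}\,\|\lambda_0-\lambda_b\|_{\partial T}.
$$
Similarly, applying (\ref{tracepolynomial}) edge-by-edge and $(\star)$ yields
$$
h_T^{1/2}\,\|\ljump a(\nabla_w \lambda-\nabla \lambda_0)\cdot\bn\rjump\|_e \;\le\; C\, h_T^{-1/2}\,\|\lambda_0-\lambda_b\|_{\partial T}.
$$
Squaring and summing over $T\in\mathcal{T}_h$ and $e\in\mathcal{E}_h$, the right-hand sides are bounded (up to constants independent of $h$) by $s(\lambda,\lambda)$, which is a common summand of both norms.

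The two directions of (\ref{equi}) now follow from the triangle inequality applied term-by-term inside the square roots defining $\3bar\cdot\3bar_{\Gamma_n^c}$ and $\3bar\cdot\3bar_{h,\Gamma_n^c}$. The only substantive obstacle is bookkeeping around the diffusion coefficient: since $\nabla\cdot(a(\nabla_w\lambda-\nabla\lambda_0))$ is not polynomial when $a$ is variable, one cannot apply an inverse inequality directly, and one must split via the product rule and then invoke the standing bounded-coefficient assumption on $a$ to reduce to polynomial inverse inequalities. Everything else is a routine application of $(\star)$, (\ref{traceinequality})--(\ref{tracepolynomial}), and the shape regularity of $\mathcal{T}_h$.
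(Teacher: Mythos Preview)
Your proposal is correct and follows the same skeleton as the paper: establish the key local bound $(\star)$ via (\ref{disgradient*}) and the trace inequality, then control the differences in the divergence and jump terms by $s(\lambda,\lambda)$, and finish with the triangle inequality. The paper proves $(\star)$ and the jump estimate exactly as you do.

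The one genuine difference is in the divergence step. You bound $\|\nabla\cdot(a(\nabla_w\lambda-\nabla\lambda_0))\|_T$ directly by the product rule and a polynomial inverse inequality, which is short but requires $\nabla a\in L^\infty$. The paper instead tests $\nabla\cdot(a\nabla_w\lambda)-\nabla\cdot(a\nabla\lambda_0)$ against $\psi\in P_{k-2}(T)$, integrates by parts, and re-expresses the pairing using (\ref{disgradient*}) with the test vector $a\nabla\psi$; this duality route avoids writing $\nabla a$ explicitly but tacitly needs $a\nabla\psi\in[P_{k-1}(T)]^d$, i.e., $a$ piecewise polynomial. So each argument carries an implicit structural assumption on $a$ beyond mere boundedness; yours is arguably the more natural one for a smooth variable coefficient, while the paper's is cleaner when $a$ is piecewise constant. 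Either way, the equivalence (\ref{equi}) follows.
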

\begin{proof}
First of all, from (\ref{disgradient*}) we have
$$
(\nabla_w \lambda- \nabla\lambda_0 , \boldsymbol{\phi})_T=   \langle \lambda_b- \lambda_0, \boldsymbol{\phi}\cdot \bn\rangle_{\partial T}
$$
for all $\boldsymbol{\phi}\in [P_{k-1}(T)]^2$. Thus, by using the Cauchy-Schwarz  inequality and the trace inequality (\ref{tracepolynomial}) we have
\begin{equation*}
\begin{split}
|(\nabla_w \lambda- \nabla\lambda_0 , \boldsymbol{\phi})_T|
 =&|\langle \lambda_b- \lambda_0, \boldsymbol{\phi}\cdot \bn\rangle_{\partial T}|
\\ \leq &\| \lambda_b- \lambda_0\|_{\partial T} \|\boldsymbol{\phi}\cdot \bn\|_{\partial T}
\\ \leq&Ch_T^{-\frac{1}{2}}\| \lambda_b- \lambda_0\|_{\partial T} \|\boldsymbol{\phi} \|_{T},
\end{split}
\end{equation*}
which leads to
$$
\|\nabla_w \lambda- \nabla\lambda_0\|_T\leq Ch_T^{-\frac{1}{2}}\| \lambda_b- \lambda_0\|_{\partial T}.
$$
Hence,
\begin{equation}\label{esti}
\sum_{T\in {\cal T}_h}\|\nabla_w \lambda- \nabla\lambda_0\|^2_T\leq C \sum_{T\in {\cal T}_h}h_T^{-1} \| \lambda_b- \lambda_0\|^2_{\partial T}\leq Cs(\lambda, \lambda).
\end{equation}

Next, using the triangle inequality, the trace inequality  (\ref{tracepolynomial}), and  (\ref{esti}), we obtain
\begin{equation}\label{term1}
 \begin{split}
&h\sum_{e\in {\cal E}_h\setminus\Gamma_d} \|\ljump a\nabla_w\lambda\cdot \bn\rjump\|_e^2\\
\leq&   h\sum_{e\in {\cal E}_h\setminus\Gamma_d} \|\ljump a(\nabla_w\lambda -\nabla \lambda_0)\cdot \bn\rjump\|_e^2+h \sum_{e\in {\cal E}_h\setminus\Gamma_d}  \| \ljump   a\nabla \lambda_0 \cdot \bn\rjump\|_e^2\\
 \leq &h \sum_{T\in {\cal T}_h} \| a(\nabla_w\lambda -\nabla \lambda_0)\cdot \bn \|_{\partial T}^2+ h\sum_{e\in {\cal E}_h\setminus\Gamma_d}  \| \ljump  a \nabla \lambda_0 \cdot \bn\rjump\|_e^2\\
\leq &    \sum_{T\in {\cal T}_h}  \| \nabla_w \lambda-\nabla \lambda_0 \|^2_{T}+  h\sum_{e\in {\cal E}_h\setminus\Gamma_d}  \| \ljump  a \nabla \lambda_0 \cdot \bn\rjump\|_e^2\\
\leq & C  s(\lambda, \lambda)+h\sum_{e\in {\cal E}_h\setminus\Gamma_d}  \| \ljump   a\nabla \lambda_0 \cdot \bn\rjump\|_e^2.\\
\end{split}
\end{equation}

We now estimate the term $\|\nabla\cdot (a\nabla_w \lambda)\|_T^2$.  Note that $\nabla_w \lambda \in [P_{k-1}(T)]^2$ so that  $\nabla\cdot (a\nabla_w \lambda) \in P_{k-2}(T)$. For any $\psi\in P_{k-2}(T)$, using the usual integration by parts  and (\ref{disgradient*}) we get
\begin{equation*}
\begin{split}
& \quad (\nabla\cdot (a\nabla_w \lambda), \psi )_T\\
&= -(a\nabla_w\lambda, \nabla \psi)_T+\langle a\nabla_w \lambda\cdot \bn, \psi\rangle_{\partial T}\\
&= -(a\nabla \lambda_0, \nabla \psi)_T+\langle \lambda_0-\lambda_b, a\nabla \psi\cdot \bn\rangle_{\partial T}+\langle a\nabla_w \lambda\cdot \bn, \psi\rangle_{\partial T}\\
&= (\nabla\cdot(a\nabla \lambda_0), \psi)_T-\langle a\nabla \lambda_0\cdot \bn, \psi\rangle_{\partial T}+ \langle \lambda_0-\lambda_b, a\nabla \psi\cdot \bn\rangle_{\partial T}+ \langle a\nabla_w \lambda\cdot \bn, \psi\rangle_{\partial T}\\
&= (\nabla\cdot(a\nabla \lambda_0),\psi)_T+\langle a(\nabla_w \lambda-\nabla \lambda_0)\cdot \bn, \psi\rangle_{\partial T}+ \langle  \lambda_0-\lambda_b, a\nabla \psi\cdot \bn\rangle_{\partial T} ,\\
\end{split}
\end{equation*}
which implies
$$
(\nabla\cdot (a\nabla_w \lambda)- \nabla\cdot(a\nabla \lambda_0), \psi )_T= \langle a(\nabla_w \lambda-\nabla \lambda_0)\cdot \bn, \psi\rangle_{\partial T}+ \langle \lambda_0-\lambda_b, a\nabla \psi\cdot \bn\rangle_{\partial T}.
$$
Thus, using the Cauchy-Schwarz inequality, the inverse inequality and the trace inequality (\ref{tracepolynomial}), we arrive at
\begin{equation*}
\begin{split}
& | (\nabla\cdot (a\nabla_w \lambda)- \nabla\cdot(a\nabla \lambda_0), \psi )_T|\\
\leq &\| a(\nabla_w \lambda-\nabla \lambda_0)\cdot \bn\|_{\partial T} \| \psi\|_{\partial T}+ \| \lambda_0-\lambda_b\|_{\partial T}\| a\nabla \psi \|_{\partial T} \\
  \leq &Ch_T^{-1}\| \nabla_w \lambda-\nabla \lambda_0 \|_{  T} \| \psi\|_{  T}+ Ch_T^{-\frac{3}{2}}\|  \lambda_0-\lambda_b\|_{\partial T}\|   \psi \|_{ T}.
\end{split}
\end{equation*}
Therefore, we obtain
$$
\|\nabla\cdot (a\nabla_w \lambda)- \nabla\cdot(a\nabla \lambda_0)\|_T^2 \leq Ch_T^{-2}\|  \nabla_w \lambda-\nabla \lambda_0 \|^2_{  T}+ Ch_T^{-3}\|  \lambda_0-\lambda_b\|^2_{\partial T}.
$$
Using (\ref{esti}) we have
\begin{equation*}
\begin{split}
&\sum_{T\in {\cal T}_h} \|\nabla\cdot (a\nabla_w \lambda)-\nabla\cdot(a\nabla \lambda_0)\|_T^2\\
\leq &\sum_{T\in {\cal T}_h}  Ch_T^{-2}\|  \nabla_w \lambda-\nabla \lambda_0 \|^2_{T}+Ch_T^{-3}\|  \lambda_0-\lambda_b\|_{\partial T}^2\\
\leq &Ch^{-2}s(\lambda,\lambda),
\end{split}
\end{equation*}
or equivalently,
$$
h^2 \sum_{T\in {\cal T}_h} \|\nabla\cdot (a\nabla_w \lambda)-\nabla\cdot(a\nabla \lambda_0)\|_T^2 \leq Cs(\lambda,\lambda).
$$
Combining the above estimate with the triangle inequality gives
\begin{equation}\label{82}
 \begin{split}
& h^2 \sum_{T\in {\cal T}_h} \|\nabla\cdot (a\nabla_w \lambda) \|_T^2 \\
\leq & h^2 \sum_{T\in {\cal T}_h} \|\nabla\cdot (a\nabla  \lambda_0) \|_T^2 +h^2 \sum_{T\in {\cal T}_h}\|\nabla\cdot (a\nabla_w \lambda)-\nabla\cdot(a\nabla \lambda_0)\|_T^2 \\
\leq & h^2 \sum_{T\in {\cal T}_h} \|\nabla\cdot (a\nabla  \lambda_0) \|_T^2 +  Cs(\lambda,\lambda).
\end{split}
\end{equation}

Now it follows from (\ref{term1}) and (\ref{82}) that there exists a constant $C_1$ such that
$$
\3bar \lambda\3bar_{\Gamma_n^c} \geq C_1 \3bar \lambda\3bar_{h, \Gamma_n^c}.
$$
This gives the lower-bound  estimate of $\3bar \lambda\3bar_{\Gamma_n^c}$ in (\ref{equi}).  The upper-bound estimate of $\3bar \lambda\3bar_{\Gamma_n^c}$ can be established analogously, but with details omitted.
\end{proof}

Similar to Lemma \ref{equiva1}, the following lemma shows that the norms defined in (\ref{norm}) and (\ref{norm1}) are also equivalent.
\begin{lemma}\label{equiva2}
There exist  $C_1>0$  and $C_2>0$ such that
$$
C_1 \3bar v  \3bar_{h, \Gamma_d}\leq  \3bar v \3bar_{ \Gamma_d }\leq C_2  \3bar v\3bar_{h, \Gamma_d},
$$
for all $v \in V^h_{0,\Gamma_d}$.
 \end{lemma}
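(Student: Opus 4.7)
The plan is to mirror the argument of Lemma \ref{equiva1} essentially verbatim, with $\lambda\in V^h_{0,\Gamma_n^c}$ replaced by $v\in V^h_{0,\Gamma_d}$ and the edge sum $\sum_{e\in\E_h\setminus\Gamma_d}$ replaced by $\sum_{e\in\E_h\setminus\Gamma_n^c}$. The three ingredients to reproduce are (i) a pointwise-in-element control of $\nabla_w v-\nabla v_0$ by the stabilizer, (ii) a comparison of the jumps of $a\nabla_w v\cdot\bn$ and $a\nabla v_0\cdot\bn$ across interior faces, and (iii) a comparison of $\nabla\cdot(a\nabla_w v)$ and $\nabla\cdot(a\nabla v_0)$ on each element.

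First I would take $\boldsymbol\phi\in[P_{k-1}(T)]^d$ in (\ref{disgradient*}) to get $(\nabla_w v-\nabla v_0,\boldsymbol\phi)_T=\langle v_b-v_0,\boldsymbol\phi\cdot\bn\rangle_{\partial T}$, then apply Cauchy--Schwarz together with the polynomial trace inequality (\ref{tracepolynomial}) to conclude
\[
\sum_{T\in\T_h}\|\nabla_w v-\nabla v_0\|_T^2\ \leq\ C\sum_{T\in\T_h}h_T^{-1}\|v_0-v_b\|_{\partial T}^2\ \leq\ Cs(v,v),
\]
exactly as in (\ref{esti}). Next, I would add and subtract $a\nabla v_0\cdot\bn$ in the jump term, apply the triangle inequality on each interior face $e\in\E_h\setminus\Gamma_n^c$, bound $h\|a(\nabla_w v-\nabla v_0)\cdot\bn\|_{\partial T}^2$ by $\|\nabla_w v-\nabla v_0\|_T^2$ via (\ref{tracepolynomial}), and use the stabilizer bound above; this reproduces the analogue of (\ref{term1}) with $\Gamma_d$ replaced by $\Gamma_n^c$ in the index set.

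For the divergence comparison, I would test $\nabla\cdot(a\nabla_w v)-\nabla\cdot(a\nabla v_0)\in P_{k-2}(T)$ against itself, integrate by parts twice (once against $\nabla_w v$ using (\ref{disgradient*}) and once against $\nabla v_0$ using the classical Green formula), and obtain
\[
(\nabla\cdot(a\nabla_w v)-\nabla\cdot(a\nabla v_0),\psi)_T=\langle a(\nabla_w v-\nabla v_0)\cdot\bn,\psi\rangle_{\partial T}+\langle v_0-v_b,a\nabla\psi\cdot\bn\rangle_{\partial T},
\]
for all $\psi\in P_{k-2}(T)$. Combining Cauchy--Schwarz with (\ref{tracepolynomial}) and the inverse inequality, then using (\ref{esti}) and the stabilizer bound, yields
\[
h^2\sum_{T\in\T_h}\|\nabla\cdot(a\nabla_w v)-\nabla\cdot(a\nabla v_0)\|_T^2\ \leq\ Cs(v,v).
\]
A triangle inequality then converts this into a two-sided comparison between $h^2\sum_T\|\nabla\cdot(a\nabla_w v)\|_T^2$ and $h^2\sum_T\|\nabla\cdot(a\nabla v_0)\|_T^2$ up to $Cs(v,v)$.

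Putting the three ingredients together gives $C_1\3bar v\3bar_{h,\Gamma_d}\le\3bar v\3bar_{\Gamma_d}\le C_2\3bar v\3bar_{h,\Gamma_d}$ for all $v\in V^h_{0,\Gamma_d}$, with the reverse inequality proved by interchanging the roles of $\nabla_w v$ and $\nabla v_0$ in the triangle-inequality step. The argument is essentially mechanical once Lemma \ref{equiva1} is in hand; I do not expect any genuinely new obstacle. The only point requiring small care is that the edge sums are indexed by $\E_h\setminus\Gamma_n^c$ here (rather than $\E_h\setminus\Gamma_d$ as in Lemma \ref{equiva1}), but since the auxiliary bound on $\nabla_w v-\nabla v_0$ is elementwise and independent of which portion of the boundary is excluded, the same face-by-face triangle inequality works without modification.
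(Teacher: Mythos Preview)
Your proposal is correct and matches the paper's approach exactly: the paper gives no separate proof of Lemma~\ref{equiva2}, stating only that it is ``similar to Lemma~\ref{equiva1},'' and your argument carries out precisely that analogy with the correct substitutions ($v$ for $\lambda$, $\E_h\setminus\Gamma_n^c$ for $\E_h\setminus\Gamma_d$). The three ingredients you identify and the observation that the elementwise bound on $\nabla_w v-\nabla v_0$ is independent of the excluded boundary portion are exactly what is needed.
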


\begin{lemma}\label{lem3}
(generalized inf-sup condition) For any  $\lambda \in  V_{0,\Gamma_n^c}^h$,  there exists a $v\in V_{0,\Gamma_d}^h$, such that
\begin{align}\label{inf1}
 b(v,\lambda) \geq& C_1  \3bar  \lambda\3bar_{h, \Gamma_n^c } ^2 -C_2s(\lambda, \lambda),\\
s(v,v) \leq & C \3bar   \lambda\3bar^2_{h,  \Gamma_n^c }. \label{inf2}
\end{align}
\end{lemma}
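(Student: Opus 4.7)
The plan is to construct the test function $v$ explicitly as a sum of two decoupled pieces, one capturing the element-residual contribution and the other capturing the edge-jump contribution to $\3bar\lambda\3bar_{h,\Gamma_n^c}^2$. Abbreviate $R_T := \nabla\cdot(a\nabla_w\lambda)|_T$ and $J_e := \ljump a\nabla_w\lambda\cdot\bn\rjump|_e$. The target is to drive $b(v,\lambda)$ up to $\sum_T h_T^2\|R_T\|_T^2+\sum_{e\in\E_h\setminus\Gamma_d} h_e\|J_e\|_e^2$, which is exactly $\3bar\lambda\3bar_{h,\Gamma_n^c}^2-s(\lambda,\lambda)$.

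First I would define $v^{(2)}\in V^h_{0,\partial\Omega}\subset V^h_{0,\Gamma_d}$ by $v^{(2)}_0|_T=-h_T^2 R_T$ and $v^{(2)}_b\equiv 0$. Using the weak-gradient identity (\ref{disgradient}) with the admissible test $\boldsymbol{\psi}=a\nabla_w\lambda$, the boundary contribution drops and
$$(a\nabla_w v^{(2)},\nabla_w\lambda)_T=(h_T^2 R_T,\nabla\cdot(a\nabla_w\lambda))_T=h_T^2\|R_T\|_T^2,$$
so $b(v^{(2)},\lambda)=\sum_T h_T^2\|R_T\|_T^2$. The polynomial trace inequality (\ref{tracepolynomial}) immediately gives $s(v^{(2)},v^{(2)})\le C\sum_T h_T^2\|R_T\|_T^2$. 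Next I would define $v^{(1)}$ by $v^{(1)}_0\equiv 0$ and $v^{(1)}_b|_e=h_e J_e$ for $e\in\E_h\setminus\Gamma_d$, with $v^{(1)}_b|_e=0$ on $\Gamma_d$, so that $v^{(1)}\in V^h_{0,\Gamma_d}$. Now (\ref{disgradient}) with the same $\boldsymbol{\psi}$ keeps only the boundary term, and summing $\sum_T\langle v^{(1)}_b,a\nabla_w\lambda\cdot\bn\rangle_{\partial T}$ assembles the two adjacent element contributions across each interior edge into a single jump pairing, producing $b(v^{(1)},\lambda)=\sum_{e\in\E_h\setminus\Gamma_d} h_e\|J_e\|_e^2$. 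A straightforward edge-by-edge bound (each edge is shared by at most two elements) yields $s(v^{(1)},v^{(1)})\le C\sum_{e\in\E_h\setminus\Gamma_d} h_e\|J_e\|_e^2$.

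Setting $v=v^{(1)}+v^{(2)}\in V^h_{0,\Gamma_d}$ and exploiting linearity of $b(\cdot,\lambda)$, I would then read off
$$b(v,\lambda)=\sum_T h_T^2\|R_T\|_T^2+\sum_{e\in\E_h\setminus\Gamma_d} h_e\|J_e\|_e^2=\3bar\lambda\3bar_{h,\Gamma_n^c}^2-s(\lambda,\lambda),$$
which delivers (\ref{inf1}) with $C_1=C_2=1$. Combining $s(v,v)\le 2s(v^{(1)},v^{(1)})+2s(v^{(2)},v^{(2)})$ with the two bounds above gives (\ref{inf2}) at once.

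The main technical subtlety I expect is that the step using (\ref{disgradient}) with $\boldsymbol{\psi}=a\nabla_w\lambda$ requires $a\nabla_w\lambda\in[P_{k-1}(T)]^d$. When $a$ is not piecewise polynomial, the clean choices of $v_0^{(2)}$ and $v_b^{(1)}$ must be replaced by their $L^2$-projections onto $P_k(T)$ and $P_k(e)$, respectively, and one must absorb the resulting consistency errors; the tools for this are already in the paper, namely the estimate (\ref{esti}) and the trace inequalities (\ref{traceinequality})--(\ref{tracepolynomial}), together with the norm equivalence established in Lemma \ref{equiva1} (allowing one to argue, if preferred, with $\nabla\cdot(a\nabla\lambda_0)$ and $\ljump a\nabla\lambda_0\cdot\bn\rjump$ in place of the weak-gradient quantities).
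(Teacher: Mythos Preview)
Your proof is correct and is essentially identical to the paper's argument: the paper chooses a single test function $v\in V^h_{0,\Gamma_d}$ with $v_0|_T=-h_T^2\,\nabla\cdot(a\nabla_w\lambda)$ and $v_b|_e=h_T\,\ljump a\nabla_w\lambda\cdot\bn\rjump$, applies (\ref{disgradient}) with $\boldsymbol{\psi}=a\nabla_w\lambda$ to obtain (\ref{bfor}), and reads off $b(v,\lambda)=\3bar\lambda\3bar_{h,\Gamma_n^c}^2-s(\lambda,\lambda)$ exactly as you do; your splitting $v=v^{(1)}+v^{(2)}$ is purely cosmetic since the sum reproduces the paper's $v$ componentwise. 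Your caveat about needing $a\nabla_w\lambda\in[P_{k-1}(T)]^d$ is well taken and applies equally to the paper's proof (indeed, the paper implicitly assumes $a$ is piecewise constant, cf.\ the line ``$\nabla_w\lambda\in[P_{k-1}(T)]^2$ so that $\nabla\cdot(a\nabla_w\lambda)\in P_{k-2}(T)$'' in the proof of Lemma~\ref{equiva1}).
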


\begin{proof} Using (\ref{disgradient}) with $\boldsymbol{\psi}=a\nabla_w \lambda$, we get
\begin{equation}\label{bfor}
\begin{split}
b(v,\lambda) =& \sum_{T\in {\cal T}_h} (a\nabla_w v, \nabla_w \lambda)_T\\
=& \sum_{T\in {\cal T}_h} (v_0, -\nabla \cdot (a\nabla_w \lambda))_T+\langle v_b, a\nabla_w \lambda \cdot \bn\rangle_{\partial T}\\
=& \sum_{T\in {\cal T}_h} (v_0, -\nabla \cdot (a\nabla_w \lambda))_T+\sum_{e\in\E_h\setminus\Gamma_d}\langle v_b, \ljump a\nabla_w \lambda \cdot \bn\rjump\rangle_{e},
\end{split}
\end{equation}
where we have used the fact that $v_b=0$ on $\Gamma_d$.

By letting $v\in V^h_{0,\Gamma_d}$ such that $v_0=-h_T^2 \nabla\cdot (a\nabla_w \lambda)$ on each element $T\in {\cal T}_h$ and $v_b= h_T \ljump a\nabla_w \lambda \cdot \bn\rjump$ on  $e\in\E_h\setminus\Gamma_d$ in (\ref{bfor}),  we arrive at
\begin{equation*}\label{bfor2}
  b(v,\lambda)=   \sum_{T\in {\cal T}_h}h_T^2\| \nabla\cdot (a\nabla_w \lambda)\|_T^2+  \sum_{e\in\E_h\setminus\Gamma_d}h_T  \|\ljump a\nabla_w \lambda \cdot \bn\rjump\|_e^2,
 \end{equation*}
 which, together with the definition (\ref{normgamma11}) of the norm $\3bar\cdot\3bar_{h,\Gamma_n^c}$, completes the proof of (\ref{inf1}).

As to (\ref{inf2}),  note that $v\in V^h_{0,\Gamma_d}$ is chosen such that $v_0=-h_T^2 \nabla\cdot (a\nabla_w \lambda)$ on each element $T\in {\cal T}_h$ and $v_b= h_T \ljump a\nabla_w \lambda \cdot \bn\rjump$ on each  $e\in\E_h\setminus \Gamma_d$. Thus, from the trace inequality (\ref{tracepolynomial}),  the triangle inequality and   (\ref{normgamma11}), we have
\begin{equation*}
\begin{split}
s(v,v)=&\sum_{T\in {\cal T}_h}h_T^{-1} \| v_0-v_b\|_{\partial T}^2 \\
& \leq \sum_{T\in {\cal T}_h} h_T^{-1} \| v_0\|_{\partial T}^2+ \sum_{e\in\E_h\setminus \Gamma_d}h_T^{-1} \|v_b\|_{e}^2\\
& \leq \sum_{T\in {\cal T}_h} h_T^{-1} \|h_T^2 \nabla\cdot (a\nabla_w \lambda) \|_{\partial T}^2+ \sum_{e\in\E_h\setminus \Gamma_d}  h_T^{-1}
\|h_T \ljump a\nabla_w \lambda \cdot \bn\rjump\|_{e}^2\\
& \leq \sum_{T\in {\cal T}_h} h_T^{2} \|\nabla\cdot (a\nabla_w \lambda)  \|_{ T}^2+  \sum_{e\in\E_h\setminus\Gamma_d} h_T
\|\ljump a\nabla_w \lambda \cdot \bn\rjump\|_{e}^2\\
& \leq  C \3bar   \lambda\3bar^2_{h,  \Gamma_n^c },
\end{split}
\end{equation*}
which completes the proof of (\ref{inf2}).
\end{proof}

Similar to Lemma \ref{lem3}, we have the following result:
\begin{lemma}\label{lem4}
(generalized inf-sup condition)  For any $v \in  V_{0,\Gamma_d}^h$, there exists a $ \lambda \in V_{0,\Gamma_n^c}^h$, such that
\begin{align}\label{inf3}
b(v,\lambda) &\geq
C_1  \3bar  v \3bar_{h,  \Gamma_d } ^2 -C_2 s(v, v),\\
s(\lambda,\lambda) & \leq  C \3bar    v\3bar^2_{h,  \Gamma_d}. \label{inf4}
\end{align}
\end{lemma}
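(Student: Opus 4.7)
The plan is to mirror the construction used in Lemma \ref{lem3}, but with the roles of $v$ and $\lambda$ reversed, exploiting the symmetry of the bilinear form $b(\cdot,\cdot)$ (which follows from the symmetry of $a$) and the definition of $\3bar\cdot\3bar_{h,\Gamma_d}$ in (\ref{norm}). Concretely, given $v\in V_{0,\Gamma_d}^h$, I would define $\lambda=\{\lambda_0,\lambda_b\}\in V_{0,\Gamma_n^c}^h$ by setting $\lambda_0=-h_T^2\nabla\cdot(a\nabla_w v)$ on each element $T\in\T_h$ and $\lambda_b=h_T\ljump a\nabla_w v\cdot\bn\rjump$ on each $e\in\E_h\setminus\Gamma_n^c$, with $\lambda_b=0$ on $\Gamma_n^c$ so that $\lambda\in V_{0,\Gamma_n^c}^h$.

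To obtain (\ref{inf3}), I would first rewrite $b(v,\lambda)=\sum_{T}(a\nabla_w v,\nabla_w\lambda)_T=\sum_T(\nabla_w\lambda,a\nabla_w v)_T$ by symmetry of $a$, then apply the definition (\ref{disgradient}) of $\nabla_w\lambda$ with test vector $\boldsymbol{\psi}=a\nabla_w v\in[P_{k-1}(T)]^d$. This produces
\[
b(v,\lambda)=\sum_{T\in\T_h}-(\lambda_0,\nabla\cdot(a\nabla_w v))_T+\sum_{e\in\E_h\setminus\Gamma_n^c}\langle\lambda_b,\ljump a\nabla_w v\cdot\bn\rjump\rangle_e,
\]
where I have used $\lambda_b=0$ on $\Gamma_n^c$ to collapse the boundary sum to an interior/Dirichlet sum. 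Substituting the chosen $\lambda_0$ and $\lambda_b$ turns the right-hand side into $\sum_T h_T^2\|\nabla\cdot(a\nabla_w v)\|_T^2+\sum_{e\in\E_h\setminus\Gamma_n^c} h_T\|\ljump a\nabla_w v\cdot\bn\rjump\|_e^2$, which equals $\3bar v\3bar_{h,\Gamma_d}^2-s(v,v)$ by (\ref{norm}). This is exactly (\ref{inf3}) with $C_1=C_2=1$.

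For (\ref{inf4}), I would bound $s(\lambda,\lambda)=\sum_T h_T^{-1}\|\lambda_0-\lambda_b\|_{\partial T}^2$ by the triangle inequality as $\sum_T h_T^{-1}\|\lambda_0\|_{\partial T}^2+\sum_{e\in\E_h\setminus\Gamma_n^c} h_T^{-1}\|\lambda_b\|_e^2$. The first sum is handled by the polynomial trace inequality (\ref{tracepolynomial}): $h_T^{-1}\|h_T^2\nabla\cdot(a\nabla_w v)\|_{\partial T}^2\le C h_T^2\|\nabla\cdot(a\nabla_w v)\|_T^2$; the second sum becomes directly $h_T\|\ljump a\nabla_w v\cdot\bn\rjump\|_e^2$. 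Summing and invoking (\ref{norm}) yields $s(\lambda,\lambda)\le C\3bar v\3bar_{h,\Gamma_d}^2$.

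I do not expect any real obstacle: the argument is entirely parallel to the one already given for Lemma \ref{lem3}, with the subsets $\E_h\setminus\Gamma_d$ and $\Gamma_d$ replaced by $\E_h\setminus\Gamma_n^c$ and $\Gamma_n^c$ respectively. The only point that deserves a moment's care is verifying that the chosen $\lambda$ indeed lies in $V_{0,\Gamma_n^c}^h$ (which is built into the construction by setting $\lambda_b=0$ on $\Gamma_n^c$) and that the integration-by-parts identity from (\ref{disgradient}) applies with $a\nabla_w v\in[P_{k-1}(T)]^d$; both are immediate. Hence the proof is essentially a one-line reference to the proof of Lemma \ref{lem3} with the indicated symbol swap, and the author indeed omits the details.
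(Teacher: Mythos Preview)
Your proposal is correct and follows essentially the same approach as the paper: the same choice of $\lambda_0=-h_T^2\nabla\cdot(a\nabla_w v)$ and $\lambda_b=h_T\ljump a\nabla_w v\cdot\bn\rjump$ on $\E_h\setminus\Gamma_n^c$, the same expansion of $b(v,\lambda)$ via (\ref{disgradient}) applied to $\nabla_w\lambda$ with test vector $a\nabla_w v$, and the same trace-inequality bound for $s(\lambda,\lambda)$. The only inaccuracy is your closing remark: the paper does \emph{not} omit the details but writes out the full argument, essentially as you have sketched it.
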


\begin{proof} Using (\ref{disgradient}) with $\boldsymbol{\psi}=a\nabla_w \lambda$, we get
\begin{equation}\label{bfor3}
\begin{split}
b(v,\lambda) =& \sum_{T\in {\cal T}_h} (\nabla_w v, a\nabla_w \lambda)_T\\
=& \sum_{T\in {\cal T}_h} ( \lambda_0, -\nabla \cdot (a\nabla_w v))_T+\langle  \lambda_b, a\nabla_w v \cdot \bn\rangle_{\partial T}\\
=& \sum_{T\in {\cal T}_h} ( \lambda_0, -\nabla \cdot (a\nabla_w v))_T+\sum_{e\in\E_h\setminus\Gamma_n^c}\langle  \lambda_b, \ljump a\nabla_w v \cdot \bn\rjump\rangle_{e},
\end{split}
\end{equation}
where we have used the fact that $\lambda_b=0$ on $\Gamma_n^c$.

By letting  $\lambda \in V_{0,\Gamma_n^c}^h$ such that $ \lambda_0=-h_T^2 \nabla\cdot (a\nabla_w v)$ on each element $T\in {\cal T}_h$ and $ \lambda_b= h_T \ljump a\nabla_w v \cdot \bn\rjump$ on  $e\in\E_h\setminus\Gamma_n^c$ in (\ref{bfor3}),   we arrive at
\begin{equation*}\label{bfor4}
b(v,\lambda)=  \sum_{T\in {\cal T}_h}h_T^2 \| \nabla\cdot (a\nabla_w v)\|_T^2+ \sum_{e\in\E_h\setminus\Gamma_n^c}  h_T\|\ljump a\nabla_w v \cdot \bn\rjump\|_e^2,
\end{equation*}
which, together with the definition (\ref{norm}) of the norm $\3bar\cdot\3bar_{h,  \Gamma_d}$, completes the proof of (\ref{inf3}).

As to (\ref{inf4}), note that  $\lambda \in V_{0,\Gamma_n^c}^h$ is given by $ \lambda_0=-h_T^2 \nabla\cdot (a\nabla_w v)$ on each element $T\in {\cal T}_h$ and $ \lambda_b= h_T \ljump a\nabla_w v \cdot \bn\rjump$ on  $e\in\E_h\setminus\Gamma_n^c$. Thus, from the trace inequality  (\ref{tracepolynomial}), the triangle inequality, and   (\ref{norm}), we have
\begin{equation*}
\begin{split}
s(\lambda,\lambda)=&\sum_{T\in {\cal T}_h}h_T^{-1} \| \lambda_0-\lambda_b\|_{\partial T}^2 \\
& \leq \sum_{T\in {\cal T}_h} h_T^{-1} \| \lambda_0\|_{\partial T}^2+ \sum_{e\in\E_h\setminus\Gamma_n^c} h_T^{-1} \|\lambda_b\|_{e}^2\\
& \leq \sum_{T\in {\cal T}_h} h_T^{-1} \|h_T^2 \nabla\cdot (a\nabla_w v) \|_{\partial T}^2+  \sum_{e\in\E_h\setminus\Gamma_n^c } h_T^{-1}\|h_T  \ljump a\nabla_w v \cdot \bn\rjump\|_{e}^2\\
& \leq \sum_{T\in {\cal T}_h} h_T^{2} \|   \nabla\cdot (a\nabla_w v)  \|_{ T}^2+  \sum_{e\in\E_h\setminus\Gamma_n^c} h_T\|  \ljump a\nabla_w v \cdot \bn\rjump\|_{e}^2\\
& \leq  C \3bar   v \3bar^2_{h,  \Gamma_d },
\end{split}
\end{equation*}
which completes the proof of (\ref{inf4}).
\end{proof}

\section{Error Equations}\label{Section:EE}
We shall derive an error equation for the primal-dual weak Galerkin algorithm (\ref{32})-(\ref{2}). To this end, let $u$ and $(u_h;\lambda_h)  \in V^h\times V^h_{0,\Gamma_n^c}$ be the solutions of the model problem (\ref{1}) and  the primal-dual weak Galerkin algorithm (\ref{32})-(\ref{2}), respectively. Note that the exact solution  of the Lagrange multiplier $\lambda$ is 0. Define the error functions by
\begin{align}\label{error}
e_h&=u_h-Q_h u,
\\
\epsilon_h&=\lambda_h-Q_h\lambda=\lambda_h.\label{error-2}
\end{align}

\begin{lemma}\label{errorequa}
Let $u$ and $(u_h;\lambda_h) \in V^h\times V^h_{0,\Gamma_n^c}$ be the solutions of the model problem (\ref{1}) and  the primal-dual weak Galerkin algorithm (\ref{32})-(\ref{2}), respectively.  Then, the error functions $e_h$ and $\epsilon_h$ defined in (\ref{error})-(\ref{error-2}) satisfy the following error equations
\begin{eqnarray}\label{sehv}
s( e_h, v)-b(v, \epsilon_h) &=&-s(Q_h u, v),\qquad \forall v\in V^h_{0,\Gamma_d},\\
s ( \epsilon_h, w)+b(e_h, w)&=&\ell_u(w),\qquad\qquad \forall w\in V^h_{0,\Gamma_n^c}. \label{sehv2}
\end{eqnarray}
\end{lemma}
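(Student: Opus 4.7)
The plan is a direct substitution argument based on the bilinearity of $s(\cdot,\cdot)$ and $b(\cdot,\cdot)$. Since the exact Lagrange multiplier is $\lambda \equiv 0$, one has $Q_h\lambda = 0$, so that $\epsilon_h = \lambda_h - Q_h\lambda = \lambda_h$, which is consistent with the definition (\ref{error-2}). Everything then reduces to inserting $u_h = e_h + Q_h u$ and $\lambda_h = \epsilon_h$ into the scheme (\ref{32})--(\ref{2}) and collecting terms.

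For (\ref{sehv}), I would start from (\ref{32}): $s(u_h,v) - b(v,\lambda_h) = 0$ for all $v \in V^h_{0,\Gamma_d}$. Writing $u_h = e_h + Q_h u$ and using bilinearity of $s(\cdot,\cdot)$ gives
$$s(e_h, v) + s(Q_h u, v) - b(v, \epsilon_h) = 0,$$
and transposing $s(Q_h u, v)$ to the right-hand side yields (\ref{sehv}) directly.

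For (\ref{sehv2}), I would similarly insert the decomposition into (\ref{2}) to obtain
$$s(\epsilon_h, w) + b(e_h, w) + b(Q_h u, w) = (f, w_0) + \langle g_2, w_b\rangle_{\Gamma_n}, \quad \forall w \in V^h_{0,\Gamma_n^c}.$$
Defining
$$\ell_u(w) := (f, w_0) + \langle g_2, w_b\rangle_{\Gamma_n} - b(Q_h u, w)$$
gives (\ref{sehv2}). To put $\ell_u(w)$ into a form that is useful for the subsequent error analysis, I would invoke the commutativity property of Lemma \ref{Lemma5.1} to rewrite $b(Q_h u, w) = \sum_T (a\,\mathcal{Q}_h \nabla u, \nabla_w w)_T$, then apply the definition (\ref{disgradient}) of $\nabla_w w$ (elementwise integration by parts against $a\,\mathcal{Q}_h\nabla u$), and finally use the PDE $-\nabla \cdot (a\nabla u) = f$ in $\Omega$ together with the Neumann datum $a\nabla u \cdot \bn = g_2$ on $\Gamma_n$ to cancel the $(f, w_0)$ and $\langle g_2, w_b\rangle_{\Gamma_n}$ contributions. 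What remains are consistency residuals of the form $\sum_T \langle (a\nabla u - a\,\mathcal{Q}_h\nabla u)\cdot\bn,\; w_0 - w_b\rangle_{\partial T}$ (plus analogous edge-jump terms), which encode the $L^2$-projection error and will later be bounded using the trace inequalities (\ref{traceinequality})--(\ref{tracepolynomial}).

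The main obstacle is the bookkeeping in the rewriting of $\ell_u(w)$: one must carefully match the element-by-element and edge-by-edge integration-by-parts identities, keep track of the $\mathcal{Q}_h$-projection error (since $\nabla u$ is not polynomial in general), and use the boundary constraints $w_b = 0$ on $\Gamma_n^c$ to discard unwanted boundary contributions. The error equations themselves, however, are simply a consequence of substitution and bilinearity; the delicate algebraic rearrangement of $\ell_u(w)$ belongs to the subsequent sections on error estimation rather than to the proof of this lemma per se.
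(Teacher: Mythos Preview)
Your approach is essentially the same as the paper's: substitute $u_h=e_h+Q_hu$ and $\lambda_h=\epsilon_h$ into (\ref{32})--(\ref{2}), use bilinearity, then rewrite $\ell_u(w)$ via Lemma~\ref{Lemma5.1}, identity~(\ref{disgradient*}), and standard integration by parts. Two minor points of alignment. First, the explicit form (\ref{lu}) of $\ell_u(w)$ is part of the lemma as stated, and the paper derives it in full within this proof rather than deferring it; the computation is the substantive content here, not an afterthought. Second, once you use $w_b=0$ on $\Gamma_n^c$ and the smoothness of $a\nabla u$ (so that $\langle a\nabla u\cdot\bn,w_b\rangle$ cancels across interior edges), the only surviving residual is exactly $\sum_T\langle a\nabla u\cdot\bn-{\cal Q}_h(a\nabla u)\cdot\bn,\,w_b-w_0\rangle_{\partial T}$; there are no ``analogous edge-jump terms'' beyond this, so your parenthetical remark is unnecessary.
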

Here, \begin{equation}\label{lu}
\begin{split}
\qquad \ell_u(w) = \sum_{T\in {\cal T}_h} \langle a\nabla u\cdot \bn-{\cal Q}_h (a\nabla u)\cdot \bn, w_b-w_0\rangle_{\partial T}.
\end{split}
\end{equation}

\begin{proof}
Subtracting $s( Q_hu, v)$ from both sides of (\ref{32}) yields
\begin{equation*}
\begin{split}
s( u_h-Q_h u, v)-b(v, \lambda_h) =-s(Q_h u, v).
\end{split}
\end{equation*}
This gives
$$
s( e_h, v)-b(v, \epsilon_h) =-s(Q_h u, v),
$$
which completes the proof of (\ref{sehv}).

Next, by subtracting $b( Q_hu, w)$ from both sides of (\ref{2}) we have for any $w\in V_{0,\Gamma_n^c}^h$ that
\begin{equation}\label{err2}
\begin{split}
&s (\lambda_h-Q_h \lambda, w)+b(u_h-Q_hu, w)\\
= &(f,w_0)+\langle g_2,w_b\rangle_{\Gamma_n}-b( Q_hu, w)\\
=&(-\nabla\cdot(a\nabla u),w_0)+\langle a\nabla u\cdot \bn,w_b\rangle_{\Gamma_n}- \sum_{T\in {\cal T}_h}( a\nabla_w(Q_hu), \nabla_w w)_T\\
=&(-\nabla\cdot(a\nabla u),w_0)+\langle a\nabla u\cdot \bn,w_b\rangle_{\Gamma_n}-
\sum_{T\in {\cal T}_h}( a{\cal Q}_h (\nabla u), \nabla _w w)_T, \\
\end{split}
\end{equation}
where we used Lemma \ref{Lemma5.1} in the last line.

Now, by letting $\boldsymbol{\psi}=a{\cal Q}_h (\nabla u)$ in (\ref{disgradient*}) and using the usual integration by parts, we obtain
\begin{equation}\label{err1}
\begin{split}
&\sum_{T\in {\cal T}_h}( a{\cal Q}_h (\nabla u), \nabla _w w)_T\\
=& \sum_{T\in {\cal T}_h}( a{\cal Q}_h (\nabla u), \nabla  w_0)_T+\sum_{T\in {\cal T}_h} \langle a{\cal Q}_h (\nabla u)\cdot \bn, w_b-w_0 \rangle_{\partial T}\\
=&\sum_{T\in {\cal T}_h} (a\nabla u, \nabla  w_0)_T+\sum_{T\in {\cal T}_h} \langle a{\cal Q}_h (\nabla u)\cdot \bn, w_b-w_0 \rangle_{\partial T}\\
=&\sum_{T\in {\cal T}_h}- (\nabla\cdot(a\nabla u), w_0)_T+ \langle a\nabla u\cdot \bn, w_0\rangle_{\partial T}+  \langle a{\cal Q}_h (\nabla u)\cdot \bn, w_b-w_0 \rangle_{\partial T}\\
 =& \sum_{T\in {\cal T}_h}- (\nabla\cdot(a\nabla u), w_0)_T+  \langle a\nabla u\cdot \bn, w_0-w_b\rangle_{\partial T}+   \langle a{\cal Q}_h (\nabla u)\cdot \bn, w_b-w_0 \rangle_{\partial T}\\&+\sum_{e\subset \Gamma_n}  \langle  a \nabla u \cdot \bn, w_b  \rangle_{e}  \\
 =& \sum_{T\in {\cal T}_h}- (\nabla\cdot(a\nabla u), w_0)_T+ \langle a\nabla u\cdot \bn-a{\cal Q}_h (\nabla u)\cdot \bn, w_0-w_b\rangle_{\partial T}\\
 & +\sum_{e\subset \Gamma_n}  \langle   a\nabla u \cdot \bn, w_b  \rangle_{e},\\
 \end{split}
\end{equation}
where we used $w_b=0$ on $\Gamma_n^c$ on the sixth line. Substituting (\ref{err1}) into (\ref{err2}) completes the proof of (\ref{sehv2}).
\end{proof}

\section{Error Estimates in a Scaled Residual Norm}\label{Section:ErrorEstimate}
The goal of this section is to derive an error estimate for the solution of the primal-dual weak Galerkin algorithm (\ref{32})-(\ref{2}). First of all, let us recall the following error estimates for the $L^2$ projection operator.

%

\begin{lemma}\cite{wy3655}
Let ${\cal T}_h$ be a finite element  partition of $\Omega$ satisfying the shape regular assumption given in \cite{wy3655}. Then, for any $0\leq s \leq 2$ and $1\leq m \leq k$, one has
\begin{eqnarray}\label{error1}
\sum_{T\in {\cal T}_h}h_T^{2s}\|u-Q_0u\|^2_{s,T}&\leq& C h^{2(m+1)}\|u\|^2_{m+1},\\
\label{error3}
\sum_{T\in {\cal T}_h}h_T^{2s}\|u-{\cal Q}_hu\|^2_{s,T}&\leq& C h^{2m}\|u\|^2_{m}.
\end{eqnarray}
\end{lemma}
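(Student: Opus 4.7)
The plan is to reduce each bound to a standard local polynomial approximation estimate on a single element $T$, and then sum over $T\in\mathcal{T}_h$. Recall that $Q_0|_T$ is the $L^2(T)$-orthogonal projector onto $P_k(T)$, and $\mathcal{Q}_h|_T$ is the $L^2(T)$-orthogonal projector onto $P_{k-1}(T)$. Thus both operators reproduce polynomials of degree $k$ and $k-1$ respectively, so that the usual polynomial-preservation plus $L^2$-stability machinery applies.

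First I would establish, on each element $T$, the pointwise-in-$T$ estimates
\[
\|u-Q_0 u\|_{s,T} \le C\, h_T^{\,m+1-s}\,|u|_{m+1,T},\qquad 0\le s\le 2,\ 1\le m\le k,
\]
and
\[
\|u-\mathcal{Q}_h u\|_{s,T} \le C\, h_T^{\,m-s}\,|u|_{m,T},\qquad 0\le s\le 2,\ 1\le m\le k.
\]
These are derived by the standard Bramble--Hilbert argument: subtract a suitable averaged Taylor polynomial $\Pi u \in P_k(T)$ (resp.\ $P_{k-1}(T)$) from $u$, use $Q_0 \Pi u = \Pi u$ so that $u - Q_0 u = (u-\Pi u) - Q_0(u-\Pi u)$, apply $L^2$-stability of $Q_0$, and control higher Sobolev seminorms by the inverse estimate on the polynomial part together with Bramble--Hilbert on $u - \Pi u$. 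The shape-regularity of $\mathcal{T}_h$ from \cite{wy3655}, which permits polygonal/polyhedral elements, provides the uniform constants in both the trace/inverse and Bramble--Hilbert inequalities via a reference-configuration argument.

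Second, multiplying each local estimate by $h_T^{2s}$ gives
\[
h_T^{2s}\|u-Q_0 u\|_{s,T}^2 \le C\, h_T^{2(m+1)}\,|u|_{m+1,T}^2,\qquad
h_T^{2s}\|u-\mathcal{Q}_h u\|_{s,T}^2 \le C\, h_T^{2m}\,|u|_{m,T}^2,
\]
and summing over $T\in\mathcal{T}_h$, using $h_T\le h$ together with the additivity $\sum_T |u|_{m+1,T}^2 = |u|_{m+1}^2 \le \|u\|_{m+1}^2$ (and analogously for $|u|_m$), yields the two displayed inequalities.

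The one step that requires care, and that I would flag as the main obstacle, is ensuring the Bramble--Hilbert and trace/inverse constants remain uniform over all $T\in\mathcal{T}_h$ despite the elements being general polygons/polyhedra. Rather than mapping to a single reference element (which is not possible here), I would invoke the shape-regularity condition of \cite{wy3655}: each $T$ admits a sub-triangulation with a uniformly bounded number of simplices whose diameters are comparable to $h_T$, on which classical Bramble--Hilbert holds with constants depending only on the shape-regularity parameter. Assembling these simplicial estimates reproduces the local bounds above, after which the summation argument is routine.
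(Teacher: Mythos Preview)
Your outline is correct and follows the standard Bramble--Hilbert plus inverse-estimate argument for $L^2$-projection error on shape-regular polytopal meshes. Note, however, that the paper does not prove this lemma at all: it is stated with a citation to \cite{wy3655} and no proof is given, so there is no ``paper's own proof'' to compare against. Your sketch is essentially what one would find in that reference (or in standard finite element texts), and the point you flag about uniform constants on general polygons/polyhedra via a bounded sub-triangulation is exactly the mechanism used in \cite{wy3655} to extend the classical simplicial estimates.
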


Using Lemma \ref{error1} and the error equation derived in the previous section we arrive at the following result:

\begin{theorem} \label{theoestimate}
For $k\geq 1$, let $(u_h;\lambda_h)  \in V^h\times V^h_{0,\Gamma_n^c}$ be the numerical approximation of the elliptic Cauchy problem (\ref{1}) obtained from the primal-dual weak Galerkin algorithm (\ref{32})-(\ref{2}). Assume that the exact solution $u$ of (\ref{1}) is sufficiently regular such that $u\in H^{k+1}(\Omega)$. Let the error functions  $e_h$ and $ \epsilon_h$ be given in (\ref{error}) and (\ref{error-2}). Then, the following error estimate holds true:
\begin{equation}\label{erres}
\3bar e_h \3bar_{h,\Gamma_d}+\3bar \epsilon_h\3bar_{h,\Gamma_n^c} \leq Ch^{k } \|u\|_{k+1}.
\end{equation}
\end{theorem}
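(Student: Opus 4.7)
The plan is to combine a direct energy identity for the stabilizer with the two generalized inf-sup conditions (Lemmas \ref{lem3} and \ref{lem4}) in order to control the full scaled-residual norms of $e_h$ and $\epsilon_h$. Throughout, I would use the error equations (\ref{sehv})--(\ref{sehv2}) from Lemma \ref{errorequa}, noting that since $u_b = Q_b g_1$ on $\Gamma_d$ we have $e_h \in V^h_{0,\Gamma_d}$, so both inf-sup lemmas are applicable.

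First I would bound the two projection-type quantities on the right-hand sides of the error equations. For $s(Q_h u, v)$, the Cauchy--Schwarz inequality gives $|s(Q_h u,v)| \le \eta_u\, s(v,v)^{1/2}$ with $\eta_u^2 = \sum_T h_T^{-1}\|Q_0 u - Q_b u\|_{\partial T}^2$. Writing $Q_0 u - Q_b u = (Q_0 u - u) + Q_b(u - Q_0 u)$ and using the $L^2$-stability of $Q_b$ together with the trace inequality (\ref{traceinequality}) and the estimate (\ref{error1}), one obtains $\eta_u \le C h^{k}\|u\|_{k+1}$. For $\ell_u(w)$ in (\ref{lu}), Cauchy--Schwarz gives $|\ell_u(w)| \le \mu_u\, s(w,w)^{1/2}$ with $\mu_u^2 = \sum_T h_T\|a\nabla u - \mathcal{Q}_h(a\nabla u)\|_{\partial T}^2$, and (\ref{traceinequality})--(\ref{error3}) yield $\mu_u \le C h^{k}\|u\|_{k+1}$. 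So both linear functionals are bounded by $C h^k \|u\|_{k+1}$ times the square root of the stabilizer evaluated at the test function.

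Next I would set $v = e_h$ and $w = \epsilon_h$ and add the two error equations to cancel $b(e_h,\epsilon_h)$, yielding
\begin{equation*}
s(e_h,e_h) + s(\epsilon_h,\epsilon_h) = -s(Q_h u, e_h) + \ell_u(\epsilon_h).
\end{equation*}
Applying the two bounds from the previous paragraph and Young's inequality, I would conclude
\begin{equation*}
s(e_h,e_h) + s(\epsilon_h,\epsilon_h) \le C h^{2k} \|u\|_{k+1}^2.
\end{equation*}
This provides the control of the stabilizer pieces that appear as the ``modulo $s$'' defect in the inf-sup estimates of Lemmas \ref{lem3}--\ref{lem4}.

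Finally I would invoke the generalized inf-sup conditions. Given $\epsilon_h \in V^h_{0,\Gamma_n^c}$, Lemma \ref{lem3} produces $\tilde v \in V^h_{0,\Gamma_d}$ with $b(\tilde v,\epsilon_h) \ge C_1 \3bar\epsilon_h\3bar_{h,\Gamma_n^c}^2 - C_2 s(\epsilon_h,\epsilon_h)$ and $s(\tilde v,\tilde v) \le C \3bar\epsilon_h\3bar_{h,\Gamma_n^c}^2$. Using (\ref{sehv}) with $v = \tilde v$, $b(\tilde v,\epsilon_h) = s(e_h,\tilde v) + s(Q_h u, \tilde v)$, so the Cauchy--Schwarz bound on $s$, together with the $\eta_u$ estimate, produces
\begin{equation*}
C_1 \3bar\epsilon_h\3bar_{h,\Gamma_n^c}^2 \le C_2 s(\epsilon_h,\epsilon_h) + C\bigl(s(e_h,e_h)^{1/2} + h^k\|u\|_{k+1}\bigr) \3bar\epsilon_h\3bar_{h,\Gamma_n^c}.
\end{equation*}
Young's inequality absorbs the quadratic term on the left, and inserting the stabilizer bound yields $\3bar\epsilon_h\3bar_{h,\Gamma_n^c} \le C h^k\|u\|_{k+1}$. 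The symmetric argument with Lemma \ref{lem4}, test $w = \tilde\lambda$ in (\ref{sehv2}), and $\mu_u$ in place of $\eta_u$ produces $\3bar e_h\3bar_{h,\Gamma_d} \le C h^k\|u\|_{k+1}$, and (\ref{erres}) follows by addition.

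The main obstacle I anticipate is the inf-sup step: the lemmas yield only a defective lower bound (weighted by $-s(\cdot,\cdot)$), so without first controlling $s(e_h,e_h)+s(\epsilon_h,\epsilon_h)$ via the energy identity, the Young-type absorption would fail; coordinating these two ingredients, and checking that the inf-sup test functions pair correctly with the \emph{other} error equation (so that $b$ can be eliminated in favor of the two $s$-terms plus the data functional), is the technical heart of the argument.
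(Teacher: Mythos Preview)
Your proposal is correct and follows essentially the same strategy as the paper's own proof: first obtain $s(e_h,e_h)+s(\epsilon_h,\epsilon_h)\le Ch^{2k}\|u\|_{k+1}^2$ from the energy identity, then invoke the generalized inf-sup conditions (Lemmas \ref{lem3}--\ref{lem4}) paired with the \emph{other} error equation to recover the full residual norms. The only cosmetic difference is that you bound $\3bar\epsilon_h\3bar_{h,\Gamma_n^c}$ before $\3bar e_h\3bar_{h,\Gamma_d}$ while the paper does the reverse; note also that your identity $Q_0u-Q_bu=(Q_0u-u)+Q_b(u-Q_0u)$ is miswritten (the right-hand side equals $Q_bu-u$), but the intended bound follows immediately from the orthogonality $\langle u-Q_bu,\,e_0-e_b\rangle_e=0$ used in the paper, or from the triangle inequality plus $L^2$-stability of $Q_b$.
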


\begin{proof} It is easy to verify that $e_h \in V^h_{0,\Gamma_d}$ and $\epsilon_h \in V^h_{0,\Gamma_n^c}$. By letting $v=e_h$ in (\ref{sehv}) and $w=\epsilon_h$ in (\ref{sehv2}) and then summing (\ref{sehv}) with (\ref{sehv2})  we obtain
\begin{equation}\label{ess2}
s(e_h,e_h)+s(\epsilon_h, \epsilon_h)=-s(Q_hu, e_h)+ \ell_u(\epsilon_h).
\end{equation}
Now from the Cauchy Schwarz inequality, the trace inequality (\ref{traceinequality}), (\ref{lu}), and the estimate (\ref{error3}) we have
\begin{equation}\label{est1}
 \begin{split}
&\qquad|\ell_u(\epsilon_h)|\\
&= \Big| \sum_{T\in {\cal T}_h}   \langle a\nabla u\cdot \bn-{\cal Q}_h (a\nabla u)\cdot \bn, \epsilon_b-\epsilon_0\rangle_{\partial T} \Big| \\
&\leq \Big (\sum_{T\in {\cal T}_h}h_T \| a\nabla u\cdot \bn-{\cal Q}_h (a\nabla u)\cdot \bn\|_{\partial T}^2\Big)^{\frac{1}{2}}
\Big ( \sum_{T\in {\cal T}_h} h_T^{-1}  \| \epsilon_b-\epsilon_0\|_{\partial T}^2\Big)^{\frac{1}{2}}\\
&\leq C \Big ( \sum_{T\in {\cal T}_h}  \| \nabla u-{\cal Q}_h (\nabla u)\|_{T}^2+h_T^2  \| \nabla u -{\cal Q}_h (\nabla u)\|_{1,T}^2\Big)^{\frac{1}{2}}
s( \epsilon_h, \epsilon_h )^{\frac{1}{2}}\\
&\leq Ch^k\|u\|_{k+1}s(\epsilon_h, \epsilon_h )^{\frac{1}{2}}.
\end{split}
\end{equation}
Next, we use the Cauchy-Schwarz inequality, the trace inequality (\ref{traceinequality}), and the estimate (\ref{error1}) to obtain
\begin{equation}\label{est2}
\begin{split}
\Big| s(Q_hu, e_h)\Big| & = \Big| \sum_{T\in {\cal T}_h} h_T^{-1} \langle Q_0u-Q_bu, e_0-e_b\rangle_{\partial T}\Big| \\
& \leq \Big ( \sum_{T\in {\cal T}_h}h_T^{-1} \| Q_0u-u\|_{\partial T}^2\Big)^{\frac12}
\Big ( \sum_{T\in {\cal T}_h}h_T^{-1} \|e_0-e_b\|_{\partial T}^2\Big)^{\frac12}\\
& \leq C \Big ( \sum_{T\in {\cal T}_h}h_T^{-1} \| Q_0u- u\|_{\partial T}^2\Big)^{\frac12}
s( e_h, e_h )^{\frac{1}{2}}\\
& \leq C \Big ( \sum_{T\in {\cal T}_h}h_T^{-2} \| Q_0u- u\|_{  T}^2+  \| Q_0u- u\|_{ 1, T}^2\Big)^{\frac12} s( e_h, e_h )^{\frac{1}{2}}\\
&\leq Ch^k\|u\|_{k+1}s(e_h, e_h)^{\frac{1}{2}}.
\end{split}
\end{equation}
Combining (\ref{ess2}) with (\ref{est1}) and (\ref{est2}) gives rise to
\begin{equation*}
s(e_h,e_h)+s(\epsilon_h, \epsilon_h) \leq  Ch^{k}\|u\|_{k+1}(s(\epsilon_h, \epsilon_h )^{\frac{1}{2}} + s(e_h, e_h)^{\frac{1}{2}}),
\end{equation*}
which leads to
\begin{equation}\label{serror}
s(e_h,e_h)+s(\epsilon_h, \epsilon_h) \leq  Ch^{2k}\|u\|_{k+1}^2.
\end{equation}

Next, from (\ref{sehv2}) we have
\begin{equation}\label{est3}
b(e_h, w)=\ell_u(w)-s(\epsilon_h, w)\qquad \forall w\in V_{0,\Gamma_n^c}^h.
\end{equation}
Using the generalized inf-sup condition in Lemma \ref{lem4}, there exists a $w \in V_{0,\Gamma_n^c}^h$ such that
\begin{equation}\label{est4}
b(e_h,w)  \geq C_1  \3bar e_h \3bar_{h,  \Gamma_d }^2 -C_2s(e_h, e_h).
\end{equation}
Thus, with this particular $w$, we have from (\ref{est3}) and (\ref{est4}) the following estimate:
\begin{equation}\label{eherror}
\3bar e_h \3bar_{h,  \Gamma_d } ^2  \leq  |\ell_u(w)|+|s(\epsilon_h, w)|+ C_2 s(e_h, e_h).
\end{equation}

Now from  (\ref{est1}) we have
$$
| \ell_u(w) | \leq Ch^{k}\|u\|_{k+1} s(w,w)^{\frac{1}{2}},
$$
and from the Cauchy-Schwarz inequality,
\begin{equation*}
|s(\epsilon_h, w)| \leq s(\epsilon_h, \epsilon_h)^{\frac{1}{2}} s(w,w)^{\frac{1}{2}}.
\end{equation*}
Substituting the above two inequalities into (\ref{eherror}) yields
\begin{equation*}
\3bar e_h \3bar_{h,  \Gamma_d } ^2  \leq   Ch^{k}\|u\|_{k+1} s(w,w)^{\frac{1}{2}} + s(\epsilon_h, \epsilon_h)^{\frac{1}{2}} s(w,w)^{\frac{1}{2}} + C_2 s(e_h, e_h).
\end{equation*}
Thus, it follows from the estimate (\ref{serror}) that
\begin{equation}\label{esti1}
\3bar e_h \3bar_{h,  \Gamma_d } ^2  \leq   Ch^{k}\|u\|_{k+1} s(w,w)^{\frac{1}{2}} + Ch^{2k}\|u\|_{k+1}^2.
\end{equation}
Recall that, from (\ref{inf4}), this particular $w$ satisfies
\begin{equation}\label{esti2}
s(w,w)^{\frac{1}{2}}\leq   C\3bar e_h\3bar_{h,\Gamma_d}.
\end{equation}
Substituting the above into (\ref{esti1}) gives
\begin{equation*}
\3bar e_h \3bar_{h,  \Gamma_d } ^2  \leq   Ch^{k}\|u\|_{k+1} \3bar e_h\3bar_{h,\Gamma_d} + Ch^{2k}\|u\|_{k+1}^2,
\end{equation*}
which leads to the following
\begin{equation}\label{esti3}
\3bar e_h \3bar_{h,\Gamma_d } \leq Ch^{k}\|u\|_{k+1}.
\end{equation}

As to the estimate of $\3bar  \epsilon_h \3bar_{h,\Gamma_n^c}$, we have from (\ref{sehv}) that
\begin{equation}\label{est5}
b(v, \epsilon_h)=s(e_h,v)+s(Q_hu, v)\qquad \forall v\in V_{0,\Gamma_d}^h.
\end{equation}
From Lemma \ref{lem3}, there exists a particular $v\in V_{0,\Gamma_d}^h$ such that
\begin{equation}\label{est6}
b(v, \epsilon_h) \geq C_1  \3bar \epsilon_h \3bar_{h,  \Gamma_n^c } ^2 -C_2s(\epsilon_h, \epsilon_h).
\end{equation}
Combining (\ref{est5}) with (\ref{est6})  gives
\begin{equation}\label{eherror2}
C_1 \3bar \epsilon_h \3bar_{h,  \Gamma_n^c } ^2  \leq  s(e_h,v)+s(Q_hu, v)+C_2s(\epsilon_h, \epsilon_h).
\end{equation}
From (\ref{est2}) we have
$$
|s(Q_hu, v)|\leq Ch^{k}\|u\|_{k+1} s(v,v)^{\frac{1}{2}},
$$
and from the Cauchy-Schwarz inequality,
$$
|s(e_h,v)| \leq  s(e_h, e_h)^{\frac{1}{2}} s(v,v)^{\frac{1}{2}}.
$$
Combining the above two inequalities with (\ref{serror}) and (\ref{eherror2}) yields
\begin{equation}\label{esti4}
\3bar \epsilon_h \3bar_{h,  \Gamma_n^c }^2 \leq C h^{k} \|u\|_{k+1} s(v,v)^{\frac{1}{2}} + Ch^{2k}\|u\|_{k+1}^2.
\end{equation}
Notice that from (\ref{inf2}), the following estimate holds true for this particular $v$:
\begin{equation}\label{esti5}
s(v,v)^{\frac{1}{2}}\leq   C \3bar \epsilon_h\3bar_{h,  \Gamma_n^c}.
\end{equation}
Thus, substituting (\ref{esti5}) into (\ref{esti4}) gives
\begin{equation}\label{esti6}
\3bar  \epsilon_h \3bar_{h,  \Gamma_n^c}    \leq   Ch^{k}\|u\|_{k+1}.
\end{equation}

Finally, the theorem is proved by combining (\ref{esti3}) with (\ref{esti6}).
\end{proof}

\section{Error Estimate in a Weak $L^2$ Topology}\label{Section:L2Error} This section is devoted to the establishment of an error estimate for the weak Galerkin finite element solution $u_h$ in a weak $L^2$ topology. To this end, consider the auxiliary problem which seeks an unknown function $\Phi$ satisfying
\begin{equation}\label{dual1}
\left\{
\begin{split}
-\nabla\cdot(a\nabla \Phi)=& \eta,\qquad\  \text{in}\quad \Omega,\\
\Phi=& \ 0,\qquad \text{on}\quad \Gamma_n^c, \\ 
a\nabla\Phi \cdot \bn=& \ 0,\qquad \text{on}\quad \Gamma_d^c,\\ 
\end{split}\right.
\end{equation}
where $\eta\in L^2(\Omega)$ and $\Gamma_d^c=\partial\Omega\setminus\Gamma_d$. Denote by $X_\gamma$ the set of all functions $\eta\in L^2(\Omega)$ so that the problem (\ref{dual1}) has a solution with the $H^{1+\gamma}$-regularity in the sense that
\begin{equation}\label{regul}
\|\Phi\|_{1+ \gamma}\leq C\|\eta\|,
\end{equation}
where $\gamma \in (\frac{1}{2}, 1]$ is a parameter.

\begin{theorem}\label{Thm:L2errorestimate}
For $k\geq 1$, let $(u_h;\lambda_h) \in V^h\times V^h_{0,\Gamma_n^c}$ be the solution of the primal-dual weak Galerkin equations (\ref{32})-(\ref{2}). Assume that the exact solution is sufficiently regular such that $u\in H^{k+1}(\Omega)$. Then, there exists a constant $C$ such that
\begin{equation}\label{e0}
\sup_{\eta\in X_\gamma} \frac{|(Q_0u - u_0,\eta)|}{\|\eta\|}  \leq Ch^{k+\gamma}\|u\|_{k+1}.
\end{equation}
\end{theorem}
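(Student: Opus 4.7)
The plan is an Aubin--Nitsche-type duality argument adapted to the primal-dual weak Galerkin setting. For a given $\eta\in X_\gamma$, let $\Phi\in H^{1+\gamma}(\Omega)$ solve the adjoint problem (\ref{dual1}), so that $\|\Phi\|_{1+\gamma}\leq C\|\eta\|$. Since $\Phi=0$ on $\Gamma_n^c$, the $L^2$-projection $Q_h\Phi=\{Q_0\Phi,Q_b\Phi\}$ lies in $V^h_{0,\Gamma_n^c}$, making it an admissible test function in the error equation (\ref{sehv2}). First I would write
\begin{equation*}
(Q_0u-u_0,\eta)=-(e_0,\eta)=\sum_{T\in{\cal T}_h}(e_0,\nabla\cdot(a\nabla\Phi))_T
\end{equation*}
and perform elementwise integration by parts, producing a volume term $-\sum_T(\nabla e_0,a\nabla\Phi)_T$ together with boundary contributions on each $\partial T$.

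Next, I would regroup the boundary contributions as $\sum_T\langle e_0-e_b,a\nabla\Phi\cdot\bn\rangle_{\partial T}+\sum_T\langle e_b,a\nabla\Phi\cdot\bn\rangle_{\partial T}$. Because $\Phi\in H^{1+\gamma}$ with $\gamma>\tfrac12$, the flux $a\nabla\Phi\cdot\bn$ is single-valued across interior edges, so the $e_b$-sum collapses to $\sum_{e\subset\partial\Omega}\langle e_b,a\nabla\Phi\cdot\bn\rangle_e$, which vanishes: $e_b=0$ on $\Gamma_d$ while $a\nabla\Phi\cdot\bn=0$ on $\Gamma_d^c$. For the volume term, I would insert ${\cal Q}_h(a\nabla\Phi)\in[P_{k-1}(T)]^d$ as test field in (\ref{disgradient*}) and invoke the commutative property $\nabla_w(Q_h\Phi)={\cal Q}_h(\nabla\Phi)$ of Lemma \ref{Lemma5.1} to convert $\sum_T(\nabla e_0,a\nabla\Phi)_T$ into $b(e_h,Q_h\Phi)$ plus a residual $\sum_T\langle e_0-e_b,a(\nabla\Phi-{\cal Q}_h\nabla\Phi)\cdot\bn\rangle_{\partial T}$. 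Substituting the error equation (\ref{sehv2}) with $w=Q_h\Phi$ then yields the key identity
\begin{equation*}
(Q_0u-u_0,\eta)=-\ell_u(Q_h\Phi)+s(\epsilon_h,Q_h\Phi)+\sum_{T\in{\cal T}_h}\langle e_0-e_b,a(\nabla\Phi-{\cal Q}_h\nabla\Phi)\cdot\bn\rangle_{\partial T}.
\end{equation*}

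It only remains to bound each term on the right by $Ch^{k+\gamma}\|u\|_{k+1}\|\eta\|$. In every case the strategy is to pair an $h^k\|u\|_{k+1}$ factor---coming from the projection estimate (\ref{error3}) applied to $a\nabla u$ inside $\ell_u$, or from $s(e_h,e_h)^{1/2}+s(\epsilon_h,\epsilon_h)^{1/2}\leq Ch^k\|u\|_{k+1}$ established during the proof of Theorem \ref{theoestimate}---with an $h^\gamma\|\eta\|$ factor, produced by Cauchy--Schwarz together with the trace inequality (\ref{traceinequality}) and the fractional-order projection bound $\sum_T h_T^{-1}\|Q_0\Phi-Q_b\Phi\|_{\partial T}^2+\sum_T h_T\|(\nabla\Phi-{\cal Q}_h\nabla\Phi)\cdot\bn\|_{\partial T}^2\leq Ch^{2\gamma}\|\Phi\|_{1+\gamma}^2\leq Ch^{2\gamma}\|\eta\|^2$. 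I expect the main technical obstacle to lie in the algebraic step that rewrites $\sum_T(\nabla e_0,a\nabla\Phi)_T$ as $b(e_h,Q_h\Phi)$: since $\nabla_w$ is only compatible with polynomial test fields, the insertion of ${\cal Q}_h$ must be done carefully, with attention to the non-commutation between ${\cal Q}_h$ and multiplication by $a$, and the resulting boundary residual must be shown to absorb into the target $h^{k+\gamma}$ rate. The assumption $\gamma>\tfrac12$ is essential both for the fractional projection rate on $\nabla\Phi$ and for the no-jump property of $a\nabla\Phi\cdot\bn$ that eliminates the $\sum_T\langle e_b,a\nabla\Phi\cdot\bn\rangle_{\partial T}$ sum via the adjoint boundary conditions.
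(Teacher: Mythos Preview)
Your proposal is correct and follows essentially the same Aubin--Nitsche duality argument as the paper: test the adjoint problem by $e_0$, integrate by parts, use the commutative property $\nabla_w(Q_h\Phi)={\cal Q}_h(\nabla\Phi)$ together with (\ref{disgradient*}) to replace $\sum_T(a\nabla e_0,\nabla\Phi)_T$ by $b(e_h,Q_h\Phi)$ plus a boundary residual, invoke the error equation (\ref{sehv2}) with $w=Q_h\Phi\in V^h_{0,\Gamma_n^c}$, and then bound $\ell_u(Q_h\Phi)$, $s(\epsilon_h,Q_h\Phi)$, and the residual term by $Ch^{k+\gamma}\|u\|_{k+1}\|\eta\|$ via Cauchy--Schwarz, the trace inequality, the projection estimates, and Theorem~\ref{theoestimate}. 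The only cosmetic difference is that the paper keeps $b(e_h,Q_h\Phi)$ unexpanded in its identity (\ref{2.14}) and estimates it directly as a whole, whereas you substitute (\ref{sehv2}) first and list the three terms; your caution about the non-commutation of ${\cal Q}_h$ with multiplication by $a$ is well placed, as the paper's derivation tacitly treats $a$ as piecewise constant at that step.
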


\begin{proof} By testing (\ref{dual1}) with $e_0$ on each element $T\in\T_h$, we obtain from the usual integration by parts that
\begin{equation}\label{2.11}
\begin{split}
(\eta, e_0)= &\sum_{T\in {\cal T}_h}(-\nabla\cdot(a\nabla \Phi),   e_0)_T \\
=& \sum_{T\in {\cal T}_h} (a\nabla \Phi, \nabla e_0)_T -\langle a\nabla\Phi\cdot \bn,   e_0  \rangle_{\partial T} \\
=& \sum_{T\in {\cal T}_h} (a\nabla\Phi, \nabla e_0)_T -\langle a\nabla\Phi\cdot \bn,   e_0-e_b  \rangle_{\partial T}, \\
\end{split}
\end{equation}
where we have used the homogeneous boundary condition in (\ref{dual1}) and the fact that $e_b=0$ on $\Gamma_d$ on the third line.

It follows from (\ref{l}) and (\ref{disgradient*}) with $\boldsymbol{\psi}={\cal Q}_h a\nabla \Phi$ that
\begin{align*}
(a\nabla_w e_h, \nabla_w (Q_h \Phi ) )_T =& (\nabla_w e_h, {\cal Q}_h a\nabla \Phi)_T\\
=& (\nabla e_0, {\cal Q}_h a\nabla\Phi)_T- \langle e_0- e_b, {\cal Q}_h a\nabla   \Phi\cdot \bn \rangle_{\partial T} \\
=& (a\nabla e_0, \nabla \Phi)_T- \langle e_0- e_b, {\cal Q}_h a \nabla \Phi\cdot \bn \rangle_{\partial T},
\end{align*}
which leads to
\begin{equation*}\label{2.13}
\begin{split}
 (a\nabla e_0,  \nabla \Phi )_T=(a\nabla_w e_h, \nabla_w (Q_h \Phi) )_T+  \langle e_0- e_b,  {\cal Q}_h a\nabla\Phi\cdot \bn \rangle_{\partial T},
 \end{split}
\end{equation*}
from which, (\ref{2.11}) can be  rewritten as follows
\begin{equation}\label{2.14}
\begin{split}
&(\eta, e_0)\\
=&\sum_{T\in {\cal T}_h}  (a\nabla_w e_h, \nabla_w (Q_h \Phi ) )_T+  \langle e_0- e_b,  {\cal Q}_h a\nabla\Phi\cdot \bn \rangle_{\partial T} -\langle a\nabla  \Phi\cdot \bn,   e_0-e_b  \rangle_{\partial T}  \\
=& \sum_{T\in {\cal T}_h}  (a\nabla_w e_h, \nabla_w (Q_h \Phi ) )_T+  \langle e_0- e_b,  ({\cal Q}_h a\nabla \Phi -a\nabla \Phi )\cdot \bn \rangle_{\partial T}.\\
\end{split}
\end{equation}

Let us deal with the first term on the last line of (\ref{2.14}). Note that $Q_b\Phi=0$ on $\Gamma_n^c$ due to the Dirichlet boundary condition in (\ref{dual1}). By setting $w:=Q_h \Phi=\{Q_0 \Phi, Q_b \Phi\}$ in the error equation (\ref{sehv2}), and then using the triangle inequality, the Cauchy-Schwarz inequality, the trace inequality (\ref{traceinequality}), (\ref{error1}), (\ref{error3}), (\ref{regul}) and (\ref{erres}), we obtain
\begin{equation}\label{2.14.120:L2}
\begin{split}
& \Big| \sum_{T\in {\cal T}_h}  (a\nabla_w e_h, \nabla_w (Q_h \Phi ) )_T  \Big| \\
= & \Big| \sum_{T\in {\cal T}_h} \langle a\nabla u\cdot \bn-{\cal Q}_h (a\nabla u)\cdot \bn, Q_b \Phi-Q_0\Phi\rangle_{\partial T} - h_T^{-1}\langle  \epsilon_0-\epsilon_b, Q_0 \Phi-Q_b\Phi \rangle_{\partial T}\Big|\\
\leq & \Big( \sum_{T\in {\cal T}_h} h_T\| \nabla u-{\cal Q}_h (\nabla u)\|^2_{\partial T} \Big)^{\frac{1}{2}} \Big( \sum_{T\in {\cal T}_h} h_T^{-1}\| Q_b \Phi-Q_0\Phi \|^2_{\partial T} \Big)^{\frac{1}{2}}\\
&+  \Big( \sum_{T\in {\cal T}_h} h_T^{-1}\|  \epsilon_0-\epsilon_b\|^2_{\partial T} \Big)^{\frac{1}{2}}    \Big( \sum_{T\in {\cal T}_h} h_T^{-1}\| Q_0 \Phi-Q_b\Phi    \|^2_{\partial T} \Big)^{\frac{1}{2}}  \\
\leq &   \Big( \sum_{T\in {\cal T}_h} \| \nabla u-{\cal Q}_h (\nabla u)\|^2_{T} +h_T ^2 \| \nabla u-{\cal Q}_h (\nabla u)\|^2_{1, T} \Big)^{\frac{1}{2}}\\
& \cdot \Big( \sum_{T\in {\cal T}_h}h_T^{-1}  \|\Phi-Q_0\Phi \|^2_{\partial T} \Big)^{\frac{1}{2}}+  \3bar \epsilon_h \3bar_{h,\Gamma_n^c}    \Big( \sum_{T\in {\cal T}_h} h_T^{-1}\| Q_0 \Phi-\Phi \|^2_{\partial T} \Big)^{\frac{1}{2}} \\
\leq & Ch^k \|u\|_{k+1}\Big( \sum_{T\in {\cal T}_h} h_T^{-2}\| Q_0 \Phi-\Phi    \|^2_{T}+\| Q_0 \Phi-\Phi\|^2_{1,T} \Big)^{\frac{1}{2}}\\
\leq & Ch^{k} \|u\|_{k+1} h^{\gamma}\|\Phi\|_{1+\gamma}\\
\leq & Ch^{k+\gamma} \|u\|_{k+1} \|\eta\|.
\end{split}
\end{equation}
For the second  term on the second line of (\ref{2.14}), it follows from the Cauchy-Schwarz inequality,  the trace inequality (\ref{traceinequality}),  (\ref{error3}), (\ref{regul}) and (\ref{erres}) that
\begin{equation}\label{EQ:New:2015:800:L2}
\begin{split}
&|\sum_{T\in{\cal T}_h} \langle e_0- e_b,  ({\cal Q}_h a\nabla\Phi -a\nabla\Phi )\cdot \bn \rangle_{\partial T} | \\
\leq & \Big(\sum_{T\in{\cal T}_h} h_T^{-1} \|e_0- e_b\|^2_{\partial T} \Big)^{\frac{1}{2}} \Big(\sum_{T\in{\cal T}_h} h_T  \|  ({\cal Q}_h a\nabla \Phi -a\nabla \Phi )\cdot \bn\|^2_{\partial T} \Big)^{\frac{1}{2}} \\
\leq & C \3bar e_h\3bar_{h, \Gamma_d}  \Big(\sum_{T\in{\cal T}_h}   \|{\cal Q}_h \nabla\Phi -\nabla\Phi\|^2_{T}+h_T^2\|{\cal Q}_h \nabla\Phi -\nabla\Phi\|^2_{1, T}  \Big)^{\frac{1}{2}}\\
\leq & Ch^{k} \|u\|_{k+1} h^{\gamma}\|\Phi\|_{1+\gamma}\\
\leq & Ch^{k+\gamma} \|u\|_{k+1}\|\eta\|.
\end{split}
\end{equation}

Finally, substituting (\ref{2.14.120:L2}) - (\ref{EQ:New:2015:800:L2}) into (\ref{2.14}) yields
$$
|(\eta, e_0)|  \leq Ch^{k+\gamma}\|u\|_{k+1}\|\eta\|,
$$
which completes the proof of the theorem.
\end{proof}

\section{Numerical Experiments}\label{Section:NE}
In this section, we shall report some numerical results to demonstrate the computational performance of the primal-dual weak Galerkin scheme (\ref{32})-(\ref{2}) for the elliptic Cauchy problem (\ref{1}). The goal is to numerically verify the convergence and stability theory established in the previous sections.

For simplicity, the numerical tests are conducted for the second order elliptic equation with diffusion coeffcient $a=1$ on the unit square domain $\Omega=(0,1)^2$ with uniform triangular partitions. The uniform triangular partitions are obtained by first partitioning the unit square domain $\Omega$ into $n\times n$ uniform sub-squares and then dividing each square element into two triangles by a diagonal line with a negative slope. The numerical tests are implemented for the lowest order element (i.e., $k=1$). The local weak finite element spaces for both the primal variable $u_h$ and the Lagrange multiplier (dual variable) $\lambda_h$ are thus given by
$$
V(1,T)=\{v=\{v_0,v_b \}:\ v_0\in P_1(T), v_b \in P_1(e), \forall \ e\subset \pT\}.
$$
The action of the discrete weak gradient operator on any $v=\{v_0,v_b\}\in V(1,T)$ is computed as a constant-valued vector on $T$ satisfying
\begin{equation*}
(\nabla_{w} v, \bw)_T= - ( v_0,\nabla \cdot \bw)_T +
\langle v_b,\bw \cdot \bn \rangle_{\partial T},\qquad \forall \bw \in [P_0(T)]^2.
\end{equation*}
Since the test function $\bw$ is a constant-valued vector on each element $T\in {\cal T}_h$, the above equation can be simplified as
\begin{equation*}
\begin{split}
(\nabla_{w} v, \bw)_T=  \langle v_b,\bw \cdot \bn \rangle_{\partial T},\qquad \forall  \bw \in [P_0(T)]^2.
\end{split}
\end{equation*}

In the numerical tests, the load function $f=f(x,y)$ and the Cauchy boundary data in the model problem (\ref{1}) are computed by using the given exact solution $u=u(x,y)$. The numerical results are demonstrated for the  error function $e_0=u_0-Q_0u$ measured in the following $L^2$ norm
$$
\|e_0\|_0 =\big( \sum_{T\in {\cal T}_h}\|e_0 \|^2_{T}\big)^{\frac{1}{2}}.
$$
For the error function $e_h=u_h-Q_hu$, we use the following scaled residual norm to measure its magnitude:
$$
\3bar e_h \3bar_{h, \Gamma_d} = \Big(\sum_{T\in {\cal T}_h}  h_T^2\| \nabla\cdot (\nabla_w e_h )\|_T^2 + \sum_{e\in \E_h\setminus \Gamma_n^c} h_T\|\ljump \nabla_w e_h\cdot\bn\rjump\|_e^2+ s(e_h , e_h)\Big)^{\frac{1}{2}}. $$

Tables \ref{NE:TRI:Case1-0} - \ref{NE:TRI:Case1-1} demonstrate the correctness and reliability of the code using the computational results for the elliptic Cauchy problem with the exact solution $u=1+x+y$. Note that the numerical solutions are coincide with the exact solution for this test case. Table \ref{NE:TRI:Case1-0} shows the numerical results for the case when both the Dirichlet and Neumann boundary conditions are set on the horizontal boundary segment $(0,1)\times 0$. Table \ref{NE:TRI:Case1-1} illustrates the performance of the numerical scheme when the vertical boundary segment $0\times (0,1)$ is used to set the Cauchy boundary data. It can be seen from Tables \ref{NE:TRI:Case1-0} - \ref{NE:TRI:Case1-1} that the errors are in machine accuracy, especially for relatively coarse grids. The numerical results are perfectly consistent with the mathematical theory. Tables \ref{NE:TRI:Case1-0} - \ref{NE:TRI:Case1-1} inform us on the correctness of the code for the PD-WG algorithm (\ref{32})-(\ref{2}). However, it should be pointed out that the error seems to deteriorate when the mesh becomes finer and finer. We conjecture that this deterioration might be caused by two factors: (i) the ill-posedness of the elliptic Cauchy problem, and (ii) the poor conditioning of the discrete linear system.

\begin{table}[H]
\begin{center}
 \caption{Numerical error and order of convergence for the exact solution $u=1+x+y$ with Dirichlet and Neumann data set on $(0,1)\times 0$.}\label{NE:TRI:Case1-0}
 \begin{tabular}{|c|c|c|c|}
\hline $1/h$ &   $\| \nabla  e_0\|_{0} $  &$\|e_0\|_{0}$ & $\3bar e_h \3bar_{h, \Gamma_d}$
\\
\hline 1  	&6.44E-15   &2.90E-15&1.08E-14
\\
\hline 2  	& 1.16E-14  & 7.97E-15&1.16E-14
\\
\hline 4 	&2.06E-14   &7.72E-15&1.03E-14
\\
\hline 8 	& 3.95E-13   & 1.53E-13&9.86E-14
\\
\hline 16  & 2.10E-12  &7.69E-13&2.62E-13
\\
\hline 32 & 2.44E-11  & 7.97E-12 &1.52E-12
\\
\hline
\end{tabular}
\end{center}
\end{table}

\begin{table}[H]
\begin{center}
 \caption{Numerical error and order of convergence for the exact solution $u=1+x+y$ with Dirichlet and Neumann data set on $0\times (0,1)$.}\label{NE:TRI:Case1-1}
 \begin{tabular}{|c|c|c|c|}
\hline $1/h$ &   $\| \nabla  e_0\|_{0} $   & $\|e_0\|_{0}$ & $\3bar e_h \3bar_{h, \Gamma_d}$
\\
\hline 						
          1&  1.69E-15  &8.88E-16& 3.38E-15
\\
\hline 2& 1.10E-14 &6.37E-15&1.10E-14
\\
\hline 4& 3.42E-14 &1.63E-14&1.71E-14
\\
\hline 8& 5.66E-13 &2.56E-13&1.42E-13
\\
\hline 16 &3.41E-12 &1.17E-12&4.26E-13
\\
\hline 32 &2.22E-11 &7.33E-12&1.39E-12
\\
\hline
\end{tabular}
\end{center}
\end{table}

Tables \ref{NE:TRI:Case2-1-2}-\ref{NE:TRI:Case2-3-2} illustrate the performance of the numerical scheme when the boundary conditions are set as follows: (i) both Dirichlet and Neumann boundary conditions on two boundary segments $(0,1)\times 0$ and $1\times (0,1)$, (ii) Dirichlet boundary condition on the boundary segment $0\times (0,1)$, and (iii) Neumann boundary condition on the boundary segment  $(0,1)\times 1$. Tables \ref{NE:TRI:Case2-1-2}-\ref{NE:TRI:Case2-3-2} demonstrate the numerical results for the exact solutions given by $u=\cos(x)\cos(y)$, $u=30xy(1-x)(1-y)$ and $u=\sin(\pi x)\cos(\pi y)$, respectively. The convergence rate in the usual $L^2$ norm for the approximation of $\nabla u_0$ arrives at the order of $\O(h)$.  For the approximation of $u_0$, the convergence rate in the usual $L^2$ norm arrives at the order of $\O(h^2)$.  The convergence rate for $e_h$ in the residual norm arrives at the order of $\O(h)$. The numerical results are in great consistency with the theory established in the previous sections.

\begin{table}[H]
\begin{center}
 \caption{Numerical error and order of convergence for the exact solution $u=\cos(x)\cos(y)$ with Dirichlet and Neumann on $(0,1)\times 0$ and $1\times (0,1)$, Dirichlet on $0\times (0,1)$, and Neumann on $(0,1)\times 1$.}\label{NE:TRI:Case2-1-2}
 \begin{tabular}{|c|c|c|c|c|c|c| }
\hline $1/h$ &  $\|\nabla e_0\|_{0} $  & order & $\| e_0\|_{0} $ & order  & $\3bar e_h \3bar_{h, \Gamma_d}$ & order
\\
\hline 						
1   &0.07776 && 0.1114 & & 1.989 &
\\
\hline
2&	0.03747 &1.053 & 0.02574 &2.113 & 0.7639&1.380
\\
\hline
4& 0.01941 &	0.9487&0.006305 &2.030 & 0.3685 &1.051
\\
\hline
8 &	0.009981 &0.9598&0.001566&2.010 & 0.1823 	&1.015
\\
\hline
16  &	0.005069	&0.9775 &3.90E-04&2.004 & 0.09097 &1.003
\\
\hline
32 &0.002555 &0.9884 &9.75E-05&2.002 & 0.04546 &1.001
\\
\hline
\end{tabular}
\end{center}
\end{table}

\begin{table}[H]
\begin{center}
 \caption{Numerical error and order of convergence for the exact solution $u=30xy(1-x)(1-y)$ with Dirichlet and Neumann on $(0,1)\times 0$ and $1\times (0,1)$, Dirichlet on $0\times (0,1)$, and Neumann on $(0,1)\times 1$.}\label{NE:TRI:Case2-2-1}
 \begin{tabular}{|c|c|c|c|c|c|c| }
\hline $1/h$ &   $\| \nabla  e_0\|_{0} $ & order & $\| e_0\|_{0} $ & order   & $\3bar e_h \3bar_{h, \Gamma_d}$ & order
\\
\hline 						
1&1.343 &&	0.8688 && 29.53  &
\\
\hline
2&0.9050&0.5740 &0.2569 &1.758  &11.32 &1.383
\\
\hline
4&0.4523&1.001&0.07225 &1.830 &5.344 &1.083
\\
\hline
8&0.2251 &1.006 &0.01915 &1.916 &2.634 &1.020
\\
\hline
16&0.1113	&1.017 &0.004913 &1.963 &1.313 &1.005
\\
\hline
32& 0.05504 &1.016 &0.001243 & 1.983 & 0.6557 &1.001
\\
\hline
\end{tabular}
\end{center}
\end{table}

\begin{table}[H]
\begin{center}
 \caption{Numerical error and order of convergence for the exact solution $u=\sin(\pi x)\cos(\pi y)$ with Dirichlet and Neumann on $(0,1)\times 0$ and $1\times (0,1)$, Dirichlet on $0\times (0,1)$, and Neumann on $(0,1)\times 1$.}\label{NE:TRI:Case2-3-2}
 \begin{tabular}{|c|c|c|c|c|c|c| }
\hline $1/h$ &  $\| \nabla  e_0\|_{0}$  & order & $\| e_0\|_{0} $ & order & $\3bar e_h \3bar_{h, \Gamma_d}$ & order
\\
\hline 						
1&1&&1.573&& 9.263  &
\\
\hline
2&  0.4908 &1.680 	&0.1849 &	2.271  &5.086 &0.8651
\\
\hline
4  &0.2187 &1.166  &0.04623 &1.999  & 2.489 &1.031
\\
\hline
8 &0.1052 &1.056 &0.01151 &2.006 &1.237 &1.009
\\
\hline
16 &0.05193 &1.018  &0.002872 &2.003 &0.6172  &1.002
\\
\hline
32 &0.02587 &1.005 &7.17E-04&2.001 &0.3085 &1.001
\\
\hline
\end{tabular}
\end{center}
\end{table}

Tables \ref{NE:TRI:Case3-1-2}-\ref{NE:TRI:Case3-3-1} demonstrate the performance of the PD-WG algorithm when the boundary conditions are set as follows: (i) Dirichlet on two boundary segments $(0,1) \times 0$ and $0\times (0,1)$, and (ii) Neumann on the other two boundary segments $1\times (0,1)$ and $(0,1)\times 1$. Note that this is a standard mixed boundary value problem with no Cauchy data given on the boundary. The purpose of the tests is to show the efficiency of the PD-WG algorithm (\ref{32})-(\ref{2}) for the classical well-posed problems of elliptic type. Tables \ref{NE:TRI:Case3-1-2} - \ref{NE:TRI:Case3-3-1} show the numerical results for the exact solutions given by $u=\cos(x)\cos(y)$,  $u=\sin(x)\sin(y)$, and $u=30xy(1-x)(1-y)$, respectively. The convergence rate in the usual $L^2$ norm for the approximation of $\nabla u_0$ arrives at the order of $\O(h)$. The convergence rate for the approximation of $u_0$ in the usual $L^2$ norm arrives at the order of $\O(h^2)$. The convergence rate for $e_h$ in the scaled residual norm arrives at the order of $\O(h)$. All the numerical results are in consistency with the theory established in the paper.

\begin{table}[H]
\begin{center}
\caption{Numerical error and order of convergence for the exact solution $u=\cos(x)\cos(y)$ with Dirichlet data on $(0,1) \times 0$ and $0\times (0,1)$, and Neumann data on $1\times (0,1)$ and $(0,1)\times 1$.}\label{NE:TRI:Case3-1-2}
 \begin{tabular}{|c|c|c|c|c|c|c| }
\hline $1/h$ & $\| \nabla  e_0\|_{0} $  & order & $\| e_0\|_{0} $ & order  & $\3bar e_h \3bar_{h, \Gamma_d}$ & order
\\
\hline 						
1& 0.05119 && 0.1929 && 1.664 &
\\
\hline
2&0.03565 &0.5218 & 0.04513 &2.096 &0.7543 &1.141
\\
\hline
4 &0.01949 &0.8709 &0.01097 & 2.040 & 0.3671 &1.039
\\
\hline
8   &0.01006 &0.9541 &0.002712 &2.016 &0.1823 &1.010
\\
\hline
16 &0.005099 &0.9806 &6.75E-04&2.007 & 0.0910  &1.003
\\
\hline
32 &0.002564 &0.9917 &1.68E-04&2.004 & 0.04546 & 1.001
\\
\hline
\end{tabular}
\end{center}
\end{table}

 \begin{table}[H]
\begin{center}
 \caption{Numerical error and order of convergence for the exact solution $u=\sin(x)\sin(y)$ with Dirichlet data on $(0,1) \times 0$ and $0\times (0,1)$, and Neumann data on $1\times (0,1)$ and $(0,1)\times 1$.}\label{NE:TRI:Case3-2-1}
 \begin{tabular}{|c|c|c|c|c|c|c| }
\hline $1/h$ &  $\| \nabla e_0\|_{0} $  & order & $\|e_0\|_{0} $ & order  & $\3bar e_h \3bar_{h, \Gamma_d}$ & order
\\
\hline 						
1&0.06759 &&0.05271 & &0.7650 &
\\
\hline
2&0.03395 &0.9932 &0.01664 &1.663 & 0.3056&1.324
\\
\hline
4&0.01814 &0.9044 &0.004519 &1.881 &0.1408 &1.118
\\
\hline
8&0.009424 &0.9448&0.001156 &1.967 & 0.0687 &1.034
\\
\hline
16&0.004785 &0.9778  &2.91E-04&1.991 & 0.03416 &1.009
\\
\hline
32&0.002407&0.9915&7.28E-05&1.997  & 0.01705 &1.002
\\
\hline
\end{tabular}
\end{center}
\end{table}

 \begin{table}[H]
\begin{center}
 \caption{Numerical error and order of convergence for the exact solution $u=30xy(1-x)(1-y)$ with Dirichlet data on $(0,1) \times 0$ and $0\times (0,1)$, and Neumann data on $1\times (0,1)$ and $(0,1)\times 1$.}\label{NE:TRI:Case3-3-1}
 \begin{tabular}{|c|c|c|c|c|c|c| }
\hline $1/h$ &  $\| \nabla e_0\|_{0} $  & order & $\|e_0\|_{0} $ & order    & $\3bar e_h \3bar_{h, \Gamma_d}$ & order
\\
\hline 						
1&1.1234 &&0.8635 & & 29.39 &
\\
\hline
2&0.6085 &0.8845 &0.2094 &2.044 & 11.27&1.383
\\
\hline
4&0.3133 &0.9578 &0.06780&1.627 & 5.337 &1.078
\\
\hline
8&0.1604 &0.9654 &0.01908 &1.829 & 2.634&1.019
\\
\hline
16&0.08045  &0.9959&0.004955 &1.945  & 1.312 &1.005
\\
\hline 32&0.04015&1.003&0.001251 &1.986 & 0.6557&1.001
\\
\hline
\end{tabular}
\end{center}
\end{table}

Tables \ref{NE:TRI:Case4-1-2}-\ref{NE:TRI:Case4-2-2} demonstrate the performance of the PD-WG algorithm (\ref{32})-(\ref{2}) for the elliptic Cauchy problem where the Cauchy boundary conditions are given at two horizontal boundary segments $(0,1)\times 0$ and $(0,1)\times 1$. Tables \ref{NE:TRI:Case4-1-2}-\ref{NE:TRI:Case4-2-2} show the numerical results when the exact solutions are given by $u=\cos(x)\cos(y)$ and $u=30xy(1-x)(1-y)$, respectively. The convergence rate for the approximation of $u_0$ in the usual $L^2$ norm seems to arrive at the order of $\O(h^{1.9})$ which is a little bit lower than the optimal order $\O(h^{2})$. For the exact solution $u=30xy(1-x)(1-y)$, the convergence rate in the usual $L^2$ norm for the approximation of $\nabla u_0$ seems to arrive at the order of $\O(h^{1.2})$ which is better than the expected order of $\O(h)$. The convergence rates for the rest of the numerical results are consistent with what the theory has predicted. Readers are invited to draw their own conclusions for the numerical performance of this set of the numerical results.

 \begin{table}[H]
\begin{center}
 \caption{Numerical error and order of convergence for the exact solution $u=\cos(x)\cos(y)$ with Dirichlet and Neumann data on $(0,1)\times 0$ and $(0,1)\times 1$.}\label{NE:TRI:Case4-1-2}
 \begin{tabular}{|c|c|c|c|c|c|c| }
\hline $1/h$  & $\| \nabla  e_0\|_{0}$ & order   & $\| e_0\|_{0} $ & order & $\3bar e_h \3bar_{h, \Gamma_d}$ & order
\\
\hline 						
1& 0.08163&& 0.1600 & & 1.669 &
\\
\hline
2    &0.04400 &0.8917 &0.04077 &1.972  & 0.7546 &1.145
\\
\hline
4     &0.02211 &0.9925  &0.01014&2.007 & 0.3671 &1.039
\\
\hline
8    &0.01105 &1.000  &0.002615 &1.955 & 0.1823 &1.010
\\
\hline
16&0.005425&1.027 &6.79E-04&1.946  & 0.09097 &1.003
\\
\hline
32   &0.002664&1.026   &1.81E-04&1.906  & 0.04546 &1.001
\\
\hline
\end{tabular}
\end{center}
\end{table}

 \begin{table}[H]
\begin{center}
 \caption{Numerical error and order of convergence for the exact solution $u=30xy(1-x)(1-y)$ with Dirichlet and Neumann data on $(0,1)\times 0$ and $(0,1)\times 1$.}\label{NE:TRI:Case4-2-2}
 \begin{tabular}{|c|c|c|c|c|c|c|c|c|}
\hline $1/h$ &   $\| \nabla  e_0\|_{0} $ & order   &   $\| e_0\|_{0} $ & order  & $\3bar e_h \3bar_{h, \Gamma_d}$ & order
\\
\hline 						
 1 &1.073&& 0.8591&& 29.41  &
 \\
\hline
 2   &0.9207 &0.2208& 0.2340 &1.876  & 11.28&1.383
 \\
\hline
4& 0.7489 &0.2979 & 0.2518&-0.1053 & 5.346 &1.077
 \\
\hline
8 &0.2628&1.511 &0.06500 &1.954& 2.634 &1.021
 \\
\hline
16&  0.1049&1.325& 0.01836&1.824& 1.312 &1.005
 \\
\hline
32&0.04540 &1.208 & 0.004975 &1.884 & 0.6557 &1.001
\\
\hline
\end{tabular}
\end{center}
\end{table}

Tables \ref{NE:TRI:Case5-1-2}-\ref{NE:TRI:Case5-2-2} demonstrate the performance of the PD-WG algorithm (\ref{32})-(\ref{2}) where both the Dirichlet and Neumann boundary conditions are set on two vertical boundary segments $0\times (0,1)$ and $1\times (0,1)$ for the exact solutions $u=xy$ and $u=\cos(x)\sin(y)$, respectively. For the exact solution $u=xy$, the convergence rate of $e_0$ in the usual $L^2$ norm seems to arrive at the order of $\O(h^{2.5})$ and the convergence rate of $e_h$ in the residual norm seems to arrive at the order of $\O(h^2)$, which are much better than the optimal order of $\O(h^{2})$ and $\O(h)$. For the exact solution $u=\cos(x)\sin(y)$, the convergence rate of $\nabla e_0$ in the $L^2$ norm seems to arrive at the order of $\O(h^{1.2})$ which is a little bit higher than the expected order of $\O(h)$; and the convergence rate of $e_0$ in the $L^2$ norm seems to arrive at the order of $\O(h^{1.9})$ which is a little bit lower than the optimal order of $\O(h^2)$. The convergence rates for the rest of the numerical results are in good consistency with the established theory. The interested readers are invited to draw their conclusions for the numerical performance.

 \begin{table}[H]
\begin{center}
 \caption{Numerical error and order of convergence for the exact solution $u=xy$ with Dirichlet and Neumann data on $0\times (0,1)$ and $1\times (0,1)$.}\label{NE:TRI:Case5-1-2}
 \begin{tabular}{|c|c|c|c|c|c|c|}
\hline $1/h$ &  $\| \nabla  e_0\|_{0} $   & order & $\| e_0\|_{0} $ & order  & $\3bar e_h \3bar_{h, \Gamma_d}$ & order
\\
\hline 						
1  &0.07206 &&0.05211&& 0.5185 &
\\
\hline
2 &0.04750 &0.6013 &0.007597 &2.778  & 0.1462 &1.826
\\
\hline
4   &0.02360 &1.009 &0.001270 &2.581 & 0.03830 &1.933
\\
\hline
8   &0.01116 &1.080 &2.34E-04&2.441& 0.009749 &1.974
\\
\hline
16&0.005356 &1.060   &3.94E-05&2.570 & 0.002457 	&1.988
\\
\hline
32  &0.002612&1.036 &6.74E-06&2.546 & 6.17E-04&1.994
\\
\hline
\end{tabular}
\end{center}
\end{table}

\begin{table}[H]
\begin{center}
 \caption{Numerical error and order of convergence for the exact solution $u=\cos(x)\sin(y)$ with Dirichlet and Neumann data on $0\times (0,1)$ and $1\times (0,1)$.}\label{NE:TRI:Case5-2-2}
 \begin{tabular}{|c|c|c|c|c|c|c|c|c|  }
\hline $1/h$ & $\| \nabla  e_0\|_{0} $ & order   & $\|e_0\|_{0} $ & order   & $\3bar e_h \3bar_{h, \Gamma_d}$ & order
\\
\hline 						
1& 0.1225 & &0.1062 & & 0.8855 &
\\
\hline
2 &0.03975 &1.624 &0.02636 &2.011& 0.4474 	& 0.9848
\\
\hline
4 &0.01623 &1.292 &0.006923 &1.929 & 0.2231 & 1.004
\\
\hline
8 &0.006297 &1.366 &0.001745 &1.988 & 0.1114 &1.002
\\
\hline
16&0.002683 &1.231 &4.66E-04&1.907 & 0.05567 &1.001
\\
\hline
32 &0.001210 &1.149&1.25E-04&1.898  & 0.02783  & 1.000
\\
\hline
\end{tabular}
\end{center}
\end{table}

Table \ref{NE:TRI:Case6-1-1} demonstrates the performance of the PD-WG algorithm (\ref{32})-(\ref{2}) for the exact solution $u=\cos(x)\cos(y)$ when the boundary conditions are set as follows: (i) both Dirichlet and Neumann data on the boundary segment $(0,1)\times 0$; (ii) Dirichlet only on the boundary segment $(0,1)\times 1$. The convergence rates in the usual $L^2$ norm for $\nabla e_0$ and $e_0$ arrive at the order of $\O(h)$ and $\O(h^2)$, respectively. The convergence rate for $e_h$ in the scaled residual norm arrives at the order of $\O(h)$. The numerical results are in perfect consistency with the theory established in the previous sections.

\begin{table}[H]
\begin{center}
 \caption{Numerical error and order of convergence for the exact solution $u=\cos(x)\cos(y)$ with Dirichlet and Neumann data on $(0,1)\times 0$, and Dirichlet only on $(0,1)\times 1$.}\label{NE:TRI:Case6-1-1}
 \begin{tabular}{|c|c|c|c|c|c|c| }
\hline $1/h$  & $\| \nabla  e_0\|_{0} $ & order  & $\|e_0\|_{0} $ & order  & $\3bar e_h \3bar_{h, \Gamma_d}$ & order
\\
\hline 						
1&0.07672 &&0.2065 && 1.667 &
\\
\hline
2&0.04154 &0.8850 &0.04355 &2.246 & 0.7542 &1.145
\\
\hline
4&0.02206&0.9132 &0.01240 &1.812 & 0.3671 &1.039
\\
\hline
8&0.0105 &1.073&0.003102&1.999 & 0.1823&1.010
\\
\hline
16&0.005245 &0.9997 &8.27E-04&1.906 & 0.0910&1.003
\\
\hline
\end{tabular}
\end{center}
\end{table}

Table \ref{NE:TRI:Case8-1-1} demonstrates the performance of the PD-WG algorithm (\ref{32})-(\ref{2}) when the Dirichlet and Neumann
boundary conditions are set on the boundary segment $(0,1)\times 0$. These numerical results illustrate that the convergence rate for the solution of the primal-dual weak Galerkin algorithm in the residual norm is at the rate of $\O(h)$, which is in great consistency with the theory established in this paper.

\begin{table}[H]
\begin{center}
\caption{Numerical error and order of convergence for the exact
solutions $u_1=\sin(x)\sin(y)$, $u_2=\cos(x)\cos(y)$ and
$u_3=\cos(x)\sin(y)$ with Dirichlet and Neumann data on $(0,1)\times 0$.}\label{NE:TRI:Case8-1-1}
 \begin{tabular}{|c|c|c|c|c|c|c|}
\hline $\frac{1}{h}$ & $\3bar e_h\3bar_{h, \Gamma_d}$ for $u_1$  & order&   $\3bar e_h\3bar_{h, \Gamma_d} $ for $u_2$  & order  & $\3bar e_h\3bar_{h, \Gamma_d}$ for $u_3$  & order
\\
 \hline
 1&1.006  && 1.657 &&0.9056	&
 \\
 \hline
 2&0.3357 &1.584 	&0.7572	&1.130 &0.4490 &1.012
 \\
 \hline
 4&0.1599 &1.070 &0.3678&1.042&0.2241 &1.003
\\
 \hline
 8&0.07354 &1.121 &0.1825 &1.011 &0.1116 &1.006
 \\
 \hline
 16&0.03563 &1.045 &0.09102 &1.003 &0.05573 &1.001
 \\
 \hline
 32&0.01751 &1.025 &0.04548 &1.001&0.02785 &1.001
 \\
 \hline
 \end{tabular}
\end{center}
\end{table}

\section{Concluding Remarks}
In conclusion, the numerical approximations arising from the primal-dual weak Galerkin finite element scheme (\ref{32})-(\ref{2}) are convergent to the exact solution at rates that are consistent with the theoretical predictions in the scaled residual norm.
In some of the numerical test cases, their convergence rates in the usual $L^2$ and $H^1$ norms seem to be slightly lower than the optimal order. Note that the estimate in Theorem \ref{Thm:L2errorestimate} was established in a weak $L^2$ topology which may not provide the usual $L^2$ error estimate due to the solvability of the auxiliary problem \eqref{dual1} for arbitrary input function $\eta$. We suspect that the loss on the rate of convergence in $L^2$ and $H^1$ norms might be caused by the ill-posedness of the elliptic Cauchy problem or the poor conditioning of the discrete linear system resulted from the scheme (\ref{32})-(\ref{2}). Nevertheless, the PD-WG finite element method (\ref{32})-(\ref{2}) does provides one and only one numerical solution even if the original elliptic Cauchy problem is not well-posed or does not have any solutions. This numerical approximation is theoretically convergent to the exact solution in a mesh-dependent scaled residual norm. Overall, we are confident that the primal-dual weak Galerkin finite element method is a reliable and robust numerical method for the ill-posed elliptic Cauchy problem.


\begin{thebibliography}{99}
\bibitem{babuska} {\sc I. Babu\u{s}ka}, {\em The finite element method with Lagrange multipliers}, Numer. Math., vol. 20, pp. 179-192, 1973.

\bibitem{b1974} {\sc F. Brezzi}, {\em On the existence, uniqueness, and approximation of saddle point problems arising from Lagrange multipliers}, RAIRO, 8 (1974), pp. 129-151.

 \bibitem{ErikBurman-EllipticCauchy} {\sc E. Burman}, {\em Error estimates for stabilized finite element methods applied to ill-posed problems}, C. R. Acad. Sci. Paris, Ser., vol. I 352,
pp. 655-659, 2014. http://dx.doi.org/10.1016/j.crma.2014.06.008

\bibitem{Burman} {\sc E. Burman}, {\em Stabilized finite element methods for nonsymmetric, noncoercive, and ill-posed problems. Part I: Elliptic equations}, SIAM J. Sci. Comput., vol. 35, pp. 2752-2780, 2013.

\bibitem{ErikBurman-EllipticCauchy-SIAM02} {\sc E. Burman}, {\em Stabilized finite element methods for nonsymmetric, noncoercive, and ill-possed problems. Part II: hyperbolic equations}, SIAM J. Sci. Comput, vol. 36, No. 4, pp. A1911-A1936, 2014.

\bibitem{Gilbarg-Trudinger} {\sc D. Gilbarg and N. S. Trudinger}. {\em Elliptic Partial Differential Equations of Second Order}. Springer-Verlag, Berlin, second edition, 1983.

\bibitem{mwy3655} {\sc L. Mu, J. Wang, and X. Ye}, {\em Weak Galerkin finite element methods on polytopal meshes}, International Journal of Numerical Analysis and Modeling, vol. 12, pp. 31-53, 2015.

\bibitem{ww2016} {\sc C. Wang and J. Wang}, {\em A primal-dual weak Galerkin finite element method for second order elliptic equations in non-divergence form},  Mathematics of Computation, Math. Comp., vol. 87, pp. 515-545, 2018.

\bibitem{ww2017} {\sc  C. Wang and J. Wang}, {\em A Primal-Dual weak Galerkin finite element method for Fokker-Planck type equations}, arXiv:1704.05606, SIAM Journal of Numerical Analysis, accepted.

\bibitem{ww2018} {\sc C. Wang and J. Wang}, {\em Primal-Dual Weak Galerkin Finite Element Methods for Elliptic Cauchy Problems},  submitted. arXiv:1806.01583.

\bibitem{WangYe_2013}
{\sc J. Wang and X. Ye},
\newblock{\em A weak Galerkin mixed finite element method for second-order ellliptic problems}, arXiv: 1104.2897vl, {J. Comp. and Appl. Math.}, {241}, 103-115, 2013.

\bibitem{wy3655} {\sc J. Wang and X. Ye}, {\em A weak Galerkin mixed finite element method for second-order elliptic problems}. Math. Comp., vol. 83, pp. 2101-2126, 2014.

\end{thebibliography}
\end{document}